\begin{document}

\title[Principal Hook Lengths And Durfee Sizes]{On Principal Hook Length Partitions And Durfee Sizes In Skew Characters}
\author[C. Gutschwager]{Christian Gutschwager}
\address{Institut für Algebra, Zahlentheorie und Diskrete Mathematik, Leibniz Universität Hannover,  Welfengarten 1, D-30167 Hannover}
\email{gutschwager (at) math (dot) uni-hannover (dot) de}

\newtheorem{Le}{Lemma}[section]
\newtheorem{Ko}[Le]{Lemma}
\newtheorem{Sa}[Le]{Theorem}
\newtheorem{pro}[Le]{Proposition}
\newtheorem{Bem}[Le]{Remark}
\newtheorem{Def}[Le]{Definition}
\newtheorem{Bsp}[Le]{Example}
\renewcommand{\l}{\lambda}
\newcommand{\bl}{\bar\lambda}
\newcommand{\bn}{\bar\nu}
\newcommand{\mA}{\mathcal{A}}
\newcommand{\mB}{\mathcal{B}}
\newcommand{\mC}{\mathcal{C}}\newcommand{\mD}{\mathcal{D}}
\renewcommand{\a}{\alpha}
\renewcommand{\b}{\beta}
\renewcommand{\pm}[1]{\begin{pmatrix}#1\end{pmatrix}}
\newcommand{\pinw}{{\pi_{nw}}}
\newcommand{\abs}[1]{\lvert #1 \rvert}
\newcommand{\tm}{\tilde\mu}
\newcommand{\tn}{\tilde\nu}
\newcommand{\m}{\mu}
\newcommand{\n}{\nu}
\newcommand{\lm}{\l/\m}
\newcommand{\h}{\hfil}

\subjclass[2000]{05E05,05E10,14M15,20C30}
\keywords{Principal hook lengths, Durfee size, skew characters, symmetric group, skew Schur functions, Schubert Calculus}

\begin{abstract}
We construct for a given arbitrary skew diagram $\mA$ all partitions $\nu$ with maximal principal hook lengths among all partitions with $[\nu]$ appearing in $[\mA]$. Furthermore we show that these are also partitions with minimal Durfee size.

We use this to give the maximal Durfee size for $[\nu]$ appearing in $[\mA]$ for the cases when $\mA$ decays into two partitions and for some special cases of $\mA$. We also deduce necessary conditions for two skew diagrams to represent the same skew character.

\end{abstract}
\maketitle

\section{Introduction}
Examining the decomposition of a skew character $[\mA]$ into irreducible characters $[\mA]=\sum_\n c_\n[\n]$ there is much interest in knowing as much as possible about the $[\nu]$ which appear in $[\mA]$. It is known for example that rearranging the parts respectively heights of $\mA$ into a partition gives the lexicographically smallest resp.\ largest  partition $\pi_{min}$ resp.\ $\pi_{max}$ and both appear with multiplicity $1$. We will show in the following how to obtain from the northwest ribbon decomposition of $\mA$ those $[\nu]$ appearing in $[\mA]$ with $\nu$ having lexicographically largest principal hook lengths. Furthermore we give an easy formula for the multiplicities of those $[\nu]$.

We construct for a skew character $[\lambda/\mu]=\sum_\nu c(\lambda;\mu,\nu) [\nu]$ the $\nu$ with maximal principal hook lengths, which are the hook lengths of the boxes $(i,i)$ of $\nu$. From this we deduce the minimal Durfee size for characters in arbitrary skew characters and the maximal Durfee size for characters in products of characters and some special skew characters.

We start in Section~\ref{Se:disc} with skew diagrams $\mA$ which decompose into two disconnected proper diagrams $\a,\b$ and so are equivalent to the product \[([\a]\times[\b])\uparrow_{S_n\times S_m}^{S_{n+m}}=:[\a]\otimes[\b].\] We construct the $\nu$ with maximal principal hook lengths and show that the corresponding $[\nu]$ all appear with multiplicity $1$ in $[\mA]$. This gives also the minimal Durfee size for $[\m]\in[\mA]$.

In Section~\ref{Se:main} we generalize Section~\ref{Se:disc} to arbitrary skew diagrams.

In Section~\ref{Se:maxD} we use the results from Sections~\ref{Se:disc} and~\ref{Se:main} and \cite[Theorem 4.2]{Gut} to construct for the product $[\a]\otimes[\b]$ and some special skew characters some $\nu$ with maximal Durfee size. In particular we can easily calculate the maximal Durfee size in those cases.

There has recently been much interest in the question of determining necessary or sufficient conditions for two skew diagrams $\mA,\mB$ to have either $[\mA]-[\mB]$ positive or even $[\mA]=[\mB]$, see for example~\cite{MN},\cite{MW},\cite{RSW}.
In Section~\ref{Se:eq} we use the results from Section~\ref{Se:main} to give necessary conditions for two skew diagrams $\mA$ and $\mB$ to represent the same skew character, i.e.\ $[\mA]=[\mB]$.

Skew characters $[\mA]$ are strongly related to skew Schur functions $s_\mA$ (see\cite{Sag}).

\section{Notation and Littlewood-Richardson-Symmetries}
We mostly follow the standard notation in \cite{Sag}. A partition $\l=(\l_1,\l_2,\ldots,\l_l)$ is a weakly decreasing sequence of non-negative integers $\l_i$, the parts of $\l$. With a partition $\l$ we associate a diagram, which we also denote by $\l$, containing $\l_i$ left-justified boxes in the $i$-th row and we use matrix-style coordinates to refer to the boxes. The length $l(\l)=l$ of a partition is the number of positive parts $\l_i$ of $\l$ and for the number of boxes in $\l$ we write $\left|\l\right|=\sum_i \l_i$.

The conjugate $\l'$ of $\l$ is the diagram which has $\l_i$ boxes in the $i$-th column. The sum $\mu+\nu=\l$ of two partitions $\mu,\nu$ is defined by $\l_i=\mu_i+\nu_i$. As always we assume $\l_i=0$ for $i>l(\l)$.

For $\mu \subseteq \l$ we define the skew diagram $\l/\mu$ as the difference of the diagrams $\l$ and $\mu$ defined as the difference of the sets of boxes. Rotation of $\l/\mu$ by $180^\circ$ yields a skew diagram $(\l/\mu)^\circ$ which is well defined up to translation. A skew tableau $T$ is a skew diagram in which the boxes are replaced by positive integers.  We refer to the entry in box $(i,j)$ as $T(i,j)$. A semistandard Young tableau of shape $\l/\mu$ is a filling of $\l/\mu$ with positive integers such that the following expressions hold for all $(i,j)$ for which they are defined: $T(i,j)<T(i+1,j)$ and $T(i,j)\leq T(i,j+1)$. The content of a semistandard tableau $T$ is $\nu=(\nu_1,\ldots)$ if the number of occurrences of the entry $i$ in $T$ is $\nu_i$. The reverse row word of a tableau $T$ is the sequence obtained by reading the entries of $T$ from right to left and top to bottom starting at the first row. Such a sequence is said to be a lattice word if for all $i,n \geq1$ the number of occurrences of $i$ among the first $n$ terms is at least the number of occurrences of $i+1$ among these terms. The Littlewood-Richardson (LR) coefficient $c(\l;\mu,\nu)$ equals the number of semistandard tableaux of shape $\l/\mu$ with content $\nu$ such that the reverse row word is a lattice word. We will call those tableaux LR-tableaux. The LR-coefficients play an important role in different contexts (see \cite{Sag}).

 The irreducible characters $[\l]$ of the symmetric group $S_n$ are naturally labeled by partitions $\l\vdash n$. The skew character $[\lm]$ corresponding to a skew diagram $\lm$ is defined by the LR-coefficients
\[ [\lm]=\sum_\n c(\l;\m,\n) [\n] \] and we write $[\nu]\in[\l/\mu]$ iff $c(\l;\mu,\nu)\neq 0$.

Some well known relations are the following:

We have that $c(\l;\mu,\nu)=c(\l;\nu,\mu)$ and $[\mA]=[\mA^\circ]$. If the skew diagrams $\l/\mu$ and $\alpha/\beta$ are the same up to translation we have $c(\l;\mu,\nu)=c(\alpha;\beta,\nu)$ for every $\nu$.

We say that a skew diagram $\mathcal{D}$ decomposes into the disconnected skew diagrams $\mathcal{A}$ and $\mathcal{B}$ if no box of $\mathcal{A}$ (viewed as boxes in $\mathcal{D}$) is in the same row or column as a box of $\mathcal{B}$. If $\mathcal{D}$ does not decompose we call it connected.

A skew character whose skew diagram $\mathcal{D}$ decomposes into disconnected (skew) diagrams $\mathcal{A},\mathcal{B}$ is equivalent to the product of the characters of the disconnected diagrams induced to a larger symmetric group. We have  \[[\mD]=([\mA]\times[\mB])\uparrow_{S_n\times S_m}^{S_{n+m}}=:[\mA]\otimes[\mB]\] with $\abs{\mA}=n,\abs{\mB}=m$.  If $\mD=\l/\mu$ and $\mA,\mB$ are proper partitions $\a,\b$ we have:
\[[\l/\mu]= \sum_\nu c(\l;\mu,\nu)[\nu]=\sum_\nu c(\nu;\a,\b)[\nu] =[\a]\otimes[\b].\]

In the cohomology ring $H^*(Gr(l,\mathbb{C}^n),\mathbb{Z})$ of the Grassmannian $Gr(l,\mathbb{C}^n)$ of $l$-di\-men\-sio\-nal subspaces of $\mathbb{C}^n$ the product of two Schubert classes $\sigma_\a, \sigma_\b$ is given by:

\[\sigma_\a\cdot\sigma_\b=\sum_{\nu\subseteq((n-l)^l)}c(\nu;\a,\b)\sigma_\nu.\]

In~\cite[Section4]{Gut} we established a close connection between the Schubert-Product and skew characters. To use this relation later on we define the Schubert-Product for characters in the obvious way as a restriction of the ordinary product:

\[[\a]\star_{(k^l)}[\b]:=\sum_{\nu\subseteq(k^l)} c(\nu;\a,\b)[\nu]. \]

On the set of partitions we define an order, the lexicographic order, and say that $\mu$ is smaller than $\nu$, $\mu<\nu$, if there is an $i$ with $\mu_i<\nu_i$ and for all $1\leq j \leq i-1$ we have $\mu_j=\nu_j$.

A hook is a partition which does not contain the subdiagram $(2^2)$ and so is of the form $(r,1^s)$. For each box $(i,j)$ in a diagram $\l$ we define its arm respectively leg length as the number of boxes to the right resp.\  below of it in the same row resp.\  column. The hook length of a box is the sum of the arm and leg lengths plus $1$ (for the box itself).

A (proper) ribbon is a connected skew diagram which does not contain the subdiagram $(2^2)$. A (disconnected) skew diagram which decomposes into ribbons will be called a weak ribbon.

For a ribbon $\mathcal{R}$, we define its arm resp.\  leg length as the number of columns resp.\  rows in $\mathcal{R}$ minus $1$. The arm resp.\  leg length of a weak ribbon $\mathcal{R}$  is defined as the sum of the arm resp.\  leg lengths of the ribbons into which  $\mathcal{R}$ decomposes, which is the number of columns resp.\  rows in  $\mathcal{R}$ minus the number of ribbons into which  $\mathcal{R}$ decomposes.

For each (connected) skew diagram  $\mA$ we define its first northwest ribbon $nw_1(\mA)$ as the subdiagram which starts in the lowest leftmost box traverses along the northwest border of $\mA$ and ends in the box in the top right. To get the second northwest ribbon $nw_2(\mA)$ we remove $nw_1$ from $\mA$ and repeat this process if $\mA/nw_1$ is still connected. This we iterate to get $nw_i(\mA)$. For a disconnected skew diagram which decays into two or more skew diagrams $\mB_j$ we define its northwest ribbons as the weak ribbons which contain the corresponding northwest ribbons of the $\mB_j$. All the northwest ribbons together form the northwest ribbon decomposition. Furthermore we define the northwest ribbon length partition $\pinw(\mA)$ associated to $\mA$ as the partition where the $i$th row has as many boxes as the $i$th northwest ribbon $nw_i(\mA)$: \[\pinw(\mA)_i=\left| nw_i(\mA)\right|.\] The northwest ribbons are weak ribbons and only in some cases proper ribbons.

\begin{Bsp}\label{ex1}
We give the northwest ribbon decomposition for the skew diagrams $(10^2,8^4,5^2)/(5^4)$ and $(10^4,8^2,3^2)/(5^4)$ and label the boxes with $i$ if they are contained in the $i$-th northwest ribbon $nw_i$:
{\footnotesize\[\young(:::::11111,:::::12222,:::::123,:::::123,11111123,12222223,12333,12344)\qquad \young(:::::11111,:::::12222,:::::12333,:::::12344,11111123,12222223,123,123).\]}
In both cases the third northwest ribbon $nw_3$ decays into two ribbons and $\pinw=(17,15,8,2)$.
\end{Bsp}

\begin{Def}
  For a partition $\l$ we define the $i$th principal hook length $hl_i(\l)$ as the hook length of the box $(i,i)$ and the (principal) hook length partition as: $hl(\l)=(hl_1(\l),hl_2(\l),\ldots)$. So we have $hl_1(\l)=\l_1+l(\l)-1$.

  For a skew diagram $\mA$ we define its hook length partition $hl(\mA)$ as the lexicographic biggest hook length partition $hl(\l)$ of all $\l$ with $[\l]$ appearing in $[\mA]$:
\[hl(\mA)=hl([\mA])=\max{}_{\textnormal{lex}}(hl(\nu)\,|\,[\nu]\in[\mA]).\]
\end{Def}

In a partition the $i$th northwest ribbon is the hook to the box $(i,i)$ so for a partition $\l$ we have $hl(\l)=\pinw(\l)$.

The Durfee size $d(\l)$ of a partition $\l$ is $d$ if $(d^d)\subseteq\l$ is the largest square contained in $\l$. This means that $hl_{d(\l)}(\l)\not=0$ but $hl_{d(\l)+1}(\l)=0$. From this follows $d(\l)=l(hl(\l))$. The Durfee size of a skew diagram $\mA$ or skew character $[\mA]$ is the biggest Durfee size of all partitions whose corresponding character appears in the decomposition of $[\mA]$:
\[d(\mA)=d([\mA])=\max(d(\nu)\,|\,[\nu]\in[\mA]).\]

\section{Maximal Hook Lengths In Products Of Irreducible Characters}\label{Se:disc}
In this section we study the case when $\mA$ decomposes into two proper partitions $\a,\b$ and show $hl(\mA)=\pinw(\mA)$. We will show that there are $2^{\min(d(\a),d(\b))}$ partitions $\nu$ with $[\nu]\in [\mA]$ and $hl(\n)=hl(\mA)$ and show that each of those $[\nu]$ appears with multiplicity $1$ in $[\mA]$. In Remark~\ref{bem:gamma} we will determine the exact shape of all these $\nu$. Furthermore we argue in Remark~\ref{bem:minDdisc} that those $[\nu]$ with $hl(\nu)=hl(\mA)$ have minimal Durfee size of all $[\tn]\in[\mA]$.

\begin{Sa} \label{Sa:discmain}
 Let $\mA$ be a skew partition decomposing into the proper partitions $\a$ and $\b$.

Then $hl(\mA)=hl(\a)+hl(\b)=\pinw(\mA)$.

Furthermore there are $2^{\min(d(\a),d(\b))}$ partitions $\nu$ with $hl(\nu)=hl(\mA)$ and $[\nu]$ appearing in $[\mA]$. All these $[\nu]$ appear with multiplicity $1$.
\end{Sa}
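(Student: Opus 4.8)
The plan is to work throughout in Frobenius (principal-hook) coordinates: I write the $i$th principal hook of a partition $\l$ as its arm $a_i(\l)=\l_i-i$ and leg $b_i(\l)=\l'_i-i$, so that $hl_i(\l)=a_i(\l)+b_i(\l)+1$ and $\l$ is recovered from the strictly decreasing sequences $(a_i)$ and $(b_i)$. The identity $hl(\a)+hl(\b)=\pinw(\mA)$ is then immediate from the definitions: since $\mA$ is disconnected into $\a,\b$, its $i$th northwest ribbon is the weak ribbon $nw_i(\a)\sqcup nw_i(\b)$, whence $\pinw(\mA)_i=\abs{nw_i(\a)}+\abs{nw_i(\b)}=hl_i(\a)+hl_i(\b)$, using that $hl=\pinw$ for a genuine partition. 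So the real content is the single equality $hl(\mA)=hl(\a)+hl(\b)$ together with the enumeration, and everything reduces to controlling, for $c(\n;\a,\b)\neq0$, how large $hl(\n)$ can be lexicographically.

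The engine is a sharp bound on the first principal hook. I would first record the elementary inequalities $\n_1\le\a_1+\b_1$ and $l(\n)\le l(\a)+l(\b)$: in any LR-tableau of shape $\n/\a$ with content $\b$ the top row of the skew shape consists only of $1$'s (giving $\n_1-\a_1\le\b_1$), and the conjugate bound follows from $c(\n;\a,\b)=c(\n';\a',\b')$. The crucial point is that these cannot be simultaneously tight, and I would prove
\[ hl_1(\n)=\n_1+l(\n)-1\le hl_1(\a)+hl_1(\b) \]
by counting the boxes of $\n/\a$ on the first principal hook of $\n$. The boxes in the first row are exactly the $1$'s placed there, while the boxes in the first column below $\a$ carry strictly increasing, hence distinct, entries from $\{1,\dots,l(\b)\}$. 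A short case distinction, according to whether the entry $1$ does or does not occur in the first column, shows that in either case these two groups together contain at most $\b_1+l(\b)-1=hl_1(\b)$ boxes; this is the asserted inequality and is the source of the ``$-1$'' in $\pinw(\mA)_1$.

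With the first-hook bound in hand I would induct on $m=\min(d(\a),d(\b))$ by peeling the first principal hook. The equality case $hl_1(\n)=hl_1(\a)+hl_1(\b)$ forces the first hook of the LR-tableau into exactly one of two rigid configurations, according to whether $1$ appears in the first column; in Frobenius coordinates these are the two distributions $a_1(\n)=a_1(\a)+a_1(\b)+\epsilon$, $b_1(\n)=b_1(\a)+b_1(\b)+(1-\epsilon)$ with $\epsilon\in\{0,1\}$ of the extra box of the merged hook onto arm or leg. In each case, deleting the first row and column of $\n$ and of $\a$ and subtracting $1$ from the surviving entries should yield an LR-tableau of shape $\hat\n/\hat\a$ with content $\hat\b$, where the hat denotes deletion of the first principal hook, so that $c(\hat\n;\hat\a,\hat\b)\neq0$ with parameter $m-1$. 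The induction then delivers all three assertions at once: a non-maximal first hook already forces $hl(\n)<_{\textnormal{lex}}hl(\a)+hl(\b)$, while a maximal one reduces via $hl(\hat\n)\le_{\textnormal{lex}}hl(\hat\a)+hl(\hat\b)$, giving the upper bound; the explicit Frobenius construction, choosing $\epsilon_i\in\{0,1\}$ at each of the $m$ common levels and copying the hooks of the partition of larger Durfee size at the remaining levels, realises equality, so $hl(\mA)=hl(\a)+hl(\b)$. Peeling sets up a bijection between extremal $\n$ and pairs (first-hook choice)$\times$(extremal partition for $\hat\a,\hat\b$), giving exactly $2\cdot2^{m-1}=2^{m}$ of them, distinct because distinct $\epsilon$-strings give distinct Frobenius coordinates; and since each rigid configuration pins the first hook of the tableau down uniquely, the whole tableau is unique by induction, so every such $[\n]$ occurs with multiplicity $1$.

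The step I expect to be the main obstacle is the peeling itself: one must verify that deleting the first row and column of an extremal LR-tableau again produces a genuine LR-tableau, the delicate part being that the lattice-word condition survives removal of the first column, whose entries are forced to be $1,\dots,l(\b)$ or $2,\dots,l(\b)$, and that the content drops by exactly the first principal hook of $\b$, i.e.\ $\b\mapsto\hat\b$. Dually one must confirm that no extremal $\n$ escapes the two rigid configurations. Once these are secured, the first-hook inequality and the surrounding Frobenius bookkeeping are routine.
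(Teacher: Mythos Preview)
Your proposal is correct and follows essentially the same route as the paper: both arguments work with LR-tableaux of shape $\nu/\alpha$ and content $\beta$, both prove the sharp first-hook bound $hl_1(\nu)\le hl_1(\alpha)+hl_1(\beta)$ by analysing the forced structure of the first row (only $1$'s) and first column (distinct entries from $\{1,\dots,l(\beta)\}$), and both identify the binary choice $\epsilon\in\{0,1\}$ at each of the $\min(d(\alpha),d(\beta))$ levels as the source of the factor $2^{\min(d(\alpha),d(\beta))}$ and of multiplicity $1$.

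The organisational difference is that the paper builds the extremal tableau \emph{outward}, iteratively filling the $i$th row/column and directly verifying at each step that the new entries are compatible with the previous ones (the check $j-\alpha'_1+1\le j-\alpha'_2+1$ between adjacent columns), whereas you strip \emph{inward} by peeling the first principal hook and recursing on $\hat\alpha,\hat\beta$. The technical work you flag as the main obstacle---that the peeled tableau is again LR and that the lift back is well-defined---is precisely the same compatibility check the paper does in the forward direction; your Frobenius-coordinate bookkeeping makes the enumeration and the shape of the extremal $\nu$ slightly more transparent, while the paper's explicit forward construction makes the LR verification a one-line inequality rather than a preservation lemma.
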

\begin{proof}
 If $\mA=\l/\mu$ decomposes into two partitions $\a$ and $\b$  we have $[\mA]=[\a]\otimes [\b]$. If we decompose $[\mA]=\sum_\nu c(\l;\mu,\nu)[\nu]=\sum_\nu c(\nu;\a,\b)[\nu]$ we have to create LR-tableaux with shape $\nu/\alpha$ and content $\b$.

To create an LR-tableau $T$ with shape $\nu/\a$ and content $\b$ and $\nu$ having the lexicographic biggest hook length partition of all $\bar\nu$ with $[\bar\nu]$ appearing in $[\mA]$ we have to fill in $T$ as many boxes as possible in the first row and column. Because of the LR conditions we can only place only the entry $1$ into the boxes of the first row and so we maximize $\nu_1$ by placing all the $\b_1$ entries $1$ into the first row and get $\nu_1=\a_1+\b_1$. Into the first column we can only place each entry once (but the entries $1$ are used up already), so we get a maximized first column by filling the boxes with entries $2$ to $l(\b)$.  So we have $l(\a)+l(\b)-1=l(\nu)$ boxes in the first column and this gives us:
\[hl_1(\mA)=hl_1(\nu)=\a_1+\b_1+l(\a)+l(\b)-1-1=hl_1(\a)+hl_1(\b).\]
Another way to obtain an LR-tableau $U$ with shape $\tilde\nu/\a$ and content $\b$ and $hl_1(\tilde\nu)=hl_1(\nu)$ would be to place the entries $1$ to $l(\b)$ into the first column and the remaining $\b_1-1$ entries $1$ into the first row. Clearly these are the only ways to maximize $hl_1(\nu)$.

We show that this can be iterated by examining the filling of $T$ (and $U$) which maximizes $hl_2(\nu)$. Without loss of generality we may assume $d(\b)\leq d(\a)$.

If $d(\b)=1$ then we are finished and have no entries anymore to place in $T$ (or $U$).

If $d(\b)\geq1$ then we have again two possibilities to maximize $hl_2(\nu)$ by either maximizing the second row or column. The fillings with maximized second row differs from the filling with maximized second column because the box $(2,2)$ belongs to $\a$ and so remains empty in $T$.
We have only to show that both possibilities satisfy the LR-conditions. Because there are only $\b_2-1$ entries $2$ left to place in $T$ (or $U$) and there are either $\b_1\geq \b_2-1$ ($T$) or $\b_1-1\geq\b_2-1$ ($U$) entries $1$ placed in the first row of $T$ (or $U$) the LR conditions are satisfied for the entries $2$ no matter where we place them. For the entries placed in the second column the LR lattice condition clearly is satisfied but we have to check that the entries are weakly decreasing amongst the rows. If we assume that the box $(j,2)$ is filled, its entry is $j-\a'_2+1$ ($+1$ if all remaining entries $2$ are placed in the second row) with $\a'_2$ the length of the second column of $\a$. The box $(j,1)$ is empty in the case $j\leq \a'_1$ or otherwise has the entry $j-\a'_1$ ($+1$ for $T$). By comparing the worst cases we have the condition: $j-\a'_1+1\leq j-\a'_2+1$ which holds for all $\a$.
Furthermore the maximized second row has no more than $\a_2+\b_2-1$ boxes which is not bigger than the number of boxes in the first row since there are $\a_1+\b_1$ ($-1$ for $U$) boxes in the first row, so $\nu$ is still a partition. The same reasoning applies to the columns.

So all works well and we can iterate the process. In the end we had $d(\b)=l(hl(\b))$ choices to make, to either maximize the $i$th row or $i$th column for $i\leq d(\b)$, and so get $2^{\min(d(\a),d(\b))}$ different partitions $\nu$. For each such $\nu$, there is a unique (so the multiplicity of $[\nu]$ in $[\mA]$ is $1$) LR-tableau $T$ of shape $\nu/\a$ and content $\b$.
\end{proof}

\begin{Bem}\label{bem:gamma}
The proof tells us even more about the explicit form of the $[\nu]$ appearing in $[\mA]$ with $hl(\nu)=hl(\mA)$.

Let $\gamma$ be the partition such that the $i$th principal hook has as arm resp.\  leg length the sum of the arm resp.\  leg lengths of the $i$th principal hooks of $\a$ and $\b$ and containing the box $(i,i)$ exactly if $i\leq\max(d(\a),d(\b))$. Then $\gamma$ is the intersection of all $\nu$ with maximal hook length partition and also the intersection of the $\nu$ where either all columns or all rows were maximized as described above.

From $\gamma$ we can construct all partitions $\nu$ appearing in $\mA$ with maximal hook length partition by adding for each $1\leq j\leq \min(d(\a),d(\b))$ a box to either the $j$th row or column of $\gamma$.\end{Bem}

\begin{Bsp}
If $\mA$ decomposes into the partitions $\a=(5^2,4^2,3,1)$ and $\b=(5,3^2,2,1^2)$
{\footnotesize \[\a=\yng(5,5,4,4,3,1) \qquad \b=\yng(5,3,3,2,1,1)\] }
the characters corresponding to the following partitions $\nu$ are the ones with maximal hook length partition in $[\mA]$. In $\bar\gamma$ the unmarked boxes form the partition $\gamma$ as in Remark~\ref{bem:gamma} and the $\nu$ are obtained by choosing for each $i\in\{1,2,3\}$ exactly one box labeled $i$ and add them to $\gamma$. $T$ resp.\  $U$ gives the actual LR-filling with all rows resp.\  columns maximized:
{\footnotesize\[\bar\gamma=\young(\h\h\h\h\h\h\h\h\h1,\h\h\h\h\h\h2,\h\h\h\h3,\h\h\h\h,\h\h\h,\h\h3,\h\h,\h2,\h,\h,\h,1) \qquad
T=\young(\h\h\h\h\h11111,\h\h\h\h\h22,\h\h\h\h3,\h\h\h\h,\h\h\h,\h3,24,3,4,5,6)\qquad
U=\young(\h\h\h\h\h1111,\h\h\h\h\h2,\h\h\h\h,\h\h\h\h,\h\h\h,\h23,13,24,3,4,5,6).\]}

 For the hook length partitions we have:
\[hl(\nu)=(20,11,5,1)=(10+10,4+7,1+4,0+1)=hl(\a)+hl(\b).\]
\end{Bsp}

\begin{Bem}\label{bem:minDdisc}
The maximum of the Durfee sizes of the partitions $\a$ and $\b$ is a lower bound for the minimal Durfee size of characters in $[\a]\otimes[\b]$ and the $[\nu]\in[\a]\otimes[\b]$ with $hl(\n)=hl([\a]\otimes[\b])$ have Durfee size $d(\nu)=\max(d(\a),d(\b))$. So the characters $[\nu]$ with $hl(\n)=hl([\a]\otimes[\b])$ are characters with minimal Durfee size which is therefore given by $\max(d(\a),d(\b))$.
\end{Bem}

\section{Maximal Hook Lengths In Skew Characters}\label{Se:main}

In this section we generalize Section~\ref{Se:disc} to the case when $\mA$ is an arbitrary skew diagram and show that also in this case $hl(\mA)=\pinw(\mA)$ is true. We will give an easy formula for the coefficient of $[\nu]$ in $[\mA]$ when $\nu$ is a partition with $hl(\nu)=hl(\mA)$. In an easy way similar to that in Section~\ref{Se:disc} we can construct all those $\nu$ explicitly. Furthermore we show in Proposition~\ref{Le-prop:minD} that also in this case the characters with maximal hook length partition have minimal Durfee size.

\begin{Le}\label{sa:mainLR}
Let $\l,\nu,\bl,\bn$ be partitions such that $\nu=(\l_1,\bn+(1^{l-1}))=(\l_1,\bn_1+1,\bn_2+1,\ldots)$, $\l=(\l_1,\bl+(1^{l-1}))$ with $l=l(\l)$. (If $l(\bn)<l-1$ we again assume $\bn_i=0$ for $l(\bn)<i<l$.)

Then for all $\mu$:
\[c(\l;\mu,\nu)=c(\bl;\mu,\bn).\]
\end{Le}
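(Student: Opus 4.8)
The plan is to collapse both sides to a single count of LR-tableaux of a \emph{common} skew shape, using only two facts already recorded above: the symmetry $c(\l;\mu,\nu)=c(\l;\nu,\mu)$ and the translation-invariance of LR-coefficients. First I would rewrite the left-hand side as $c(\l;\mu,\nu)=c(\l;\nu,\mu)$, so that it counts LR-tableaux of shape $\l/\nu$ with content $\mu$; in the same way $c(\bl;\mu,\bn)=c(\bl;\bn,\mu)$ counts LR-tableaux of shape $\bl/\bn$ with content $\mu$. It then suffices to prove that $\l/\nu$ and $\bl/\bn$ are the same skew diagram up to translation, because the quoted relation immediately upgrades this to equality of the two tableau counts for \emph{every} $\mu$ simultaneously, which is exactly the asserted identity.

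The geometric heart of the matter is that $\l$ and $\nu$ share their entire first principal hook. From $\nu=(\l_1,\bn+(1^{l-1}))$ one reads off $\nu_1=\l_1$ and $l(\nu)=l=l(\l)$, so the boxes $(1,1),\dots,(1,\l_1)$ and $(2,1),\dots,(l,1)$ lie in both diagrams. Since $\l_i=\bl_{i-1}+1$ and $\nu_i=\bn_{i-1}+1$ for $i\ge 2$, deleting this common hook leaves $\bl$ (resp.\ $\bn$) sitting in rows $2,\dots,l$ and columns $\ge 2$. Consequently, in $\l/\nu$ the first row is empty, and for $i\ge 2$ its $i$th row occupies columns $\bn_{i-1}+2,\dots,\bl_{i-1}+1$; this is precisely the $(i-1)$st row of $\bl/\bn$ (columns $\bn_{i-1}+1,\dots,\bl_{i-1}$) pushed down and to the right by the vector $(1,1)$. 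Hence $\l/\nu$ is a translate of $\bl/\bn$, and the argument closes.

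The one point requiring care is that the symmetry step presupposes the skew shapes actually exist, i.e.\ $\nu\subseteq\l$ and $\bn\subseteq\bl$; the index bookkeeping above shows these two containments are equivalent, as both reduce to $\bn_{i-1}\le\bl_{i-1}$ for all $i$. I would therefore dispose of the degenerate case at the outset: if $\nu\not\subseteq\l$, then $c(\l;\mu,\nu)=c(\l;\nu,\mu)=0$ since no tableau of shape $\l/\nu$ exists, and $\bn\not\subseteq\bl$ forces $c(\bl;\mu,\bn)=0$ as well, so both sides vanish. Assuming $\nu\subseteq\l$, the translation argument runs verbatim. I do not expect a structural obstacle here; the only genuine work is matching row $i$ of $\l/\nu$ with row $i-1$ of $\bl/\bn$ and verifying that the column ranges agree after the $(1,1)$ shift. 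The conceptual content is simply that, after applying the symmetry, removing the shared first principal hook from $\l$ and $\nu$ amounts to deleting an empty first row, which cannot change the tableau count.
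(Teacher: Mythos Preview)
Your proof is correct and follows essentially the same route as the paper: both reduce the identity to the observation that $\l/\nu$ and $\bl/\bn$ are the same skew shape up to translation, whence $[\l/\nu]=[\bl/\bn]$ and the LR-coefficients agree. The paper asserts this in one line without further justification; you have simply made explicit the symmetry step $c(\l;\mu,\nu)=c(\l;\nu,\mu)$, verified the $(1,1)$-shift row by row, and disposed of the degenerate case $\nu\not\subseteq\l$.
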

\begin{proof}
We have $\bl/\bn=\l/\nu$ and so $[\bl/\bn]=[\l/\nu]$. This gives $c(\l;\mu,\nu)=c(\bl;\mu,\bn)$.
\end{proof}

\begin{Bem}\label{bem:LRmain}
So the LR fillings of $\bl/\bn$ with content $\mu$ and the LR fillings of $\l/\nu$ with content $\mu$ are the same. If $\l/\mu$ is connected, this gives us an $1-1$-correspondence between the characters $[\bn]$ in $[\bl/\mu]$ and the characters $[\nu]$ in $[\l/\mu]$ having maximal first principal hook length. In particular we have $hl_1(\mA)=\l_1+l(\l)-1$ if $\mA$ is connected.
\end{Bem}

\begin{Le}\label{bem-le:main}
 Let $\l/\mu$ be connected.

If we remove $nw_1(\l/\mu)$ from $\l/\mu$ the remaining skew diagram is $\bl/\mu$. So $\abs{nw_1(\mA)}=hl_1(\mA)$.
\end{Le}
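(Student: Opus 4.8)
The plan is to describe $nw_1(\lm)$ explicitly as a set of boxes and to recognise the complementary boxes as a translated copy of $\bl/\m$. Recall from Lemma~\ref{sa:mainLR} that $\bl=(\l_2-1,\l_3-1,\ldots,\l_l-1)$ with $l=l(\l)$; equivalently $\bl$ is $\l$ with its first principal hook (the first row together with the first column) deleted. Since $\bl_{i}=\l_{i+1}-1$, the diagonal shift $\phi(i,j)=(i-1,j-1)$ sends the boxes of $\l$ lying outside its first principal hook bijectively onto $\bl$. I will work with $\lm$ in reduced form, so that both its first row and first column contain a box; this is the natural normal form for a connected diagram taken up to translation, and it ensures $l(\m)<l(\l)$, hence $\m\subseteq\bl$.

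First I would establish the description
\[
nw_1(\lm)=\{(i,j)\in\lm : (i-1,j-1)\notin\lm\},
\]
so that $nw_1(\lm)$ consists exactly of the boxes whose northwest diagonal neighbour is absent. Geometrically these are the boxes pressed against the inner (northwest) boundary of $\lm$; for connected $\lm$ they form a single path starting at the lowest leftmost box, running along the northwest border, and ending at the box in the top right, which is precisely the ribbon $nw_1(\lm)$ of the definition. That this set contains no $2\times2$ block is immediate, since $(i,j)$ and $(i-1,j-1)$ cannot both belong to a set whose members have $(i-1,j-1)\notin\lm$.

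Next I would examine the complement $\lm\setminus nw_1(\lm)=\{(i,j)\in\lm : (i-1,j-1)\in\lm\}$ under $\phi$. Writing $(i,j)\in\lm$ as $\m_i<j\le\l_i$ and using $\bl_{i-1}=\l_i-1$, a direct comparison of the defining inequalities shows that $(i,j)$ and $(i-1,j-1)$ both lie in $\lm$ exactly when $\phi(i,j)=(i-1,j-1)$ lies in $\bl/\m$; thus $\phi$ restricts to a bijection from $\lm\setminus nw_1(\lm)$ onto $\bl/\m$, which is the assertion of the lemma. The only place connectivity is needed is the inequality $\m_{i-1}<\l_i$ for $2\le i\le l(\l)$: were it to fail, rows $i-1$ and $i$ of $\lm$ would occupy disjoint sets of columns and $\lm$ would be disconnected. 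This inequality is exactly what yields $\m\subseteq\bl$ and makes the row ranges align, so I expect the \emph{main obstacle} to be the careful matching of the set above with the definitional ribbon, together with this use of connectivity, rather than the bijection itself.

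The size statement then follows by counting. Since $\phi$ is a bijection onto $\bl/\m$ and $\m\subseteq\bl$,
\[
\abs{nw_1(\lm)}=\abs{\lm}-\abs{\bl/\m}=\abs{\l}-\abs{\bl}=\l_1+l(\l)-1,
\]
the last equality because $\abs{\bl}=\sum_{i\ge2}(\l_i-1)=\abs{\l}-\l_1-(l(\l)-1)$. By Remark~\ref{bem:LRmain} this common value equals $hl_1(\mA)$, giving $\abs{nw_1(\mA)}=hl_1(\mA)$.
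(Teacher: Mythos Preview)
Your proposal is correct and follows essentially the same route as the paper's proof: both identify $(\lm)\setminus nw_1(\lm)$ with the set of boxes $(i,j)\in\lm$ whose diagonal northwest neighbour $(i-1,j-1)$ also lies in $\lm$, and then recognise this set as (a translate of) $\bl/\m$. The paper states these two characterisations in one sentence and reads off $\abs{nw_1(\mA)}=\l_1+l(\l)-1$ directly from the ribbon's span, whereas you make the diagonal shift $\phi$ explicit, verify the row inequalities, isolate where connectivity enters (namely $\m_{i-1}<\l_i$, equivalently $\m\subseteq\bl$), and obtain the size formula from the bijection; this is more careful but not a different argument.
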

\begin{proof}
The skew diagram $\bl/\m$ consists of those boxes $(i,j)$ that are in $\lm$ such that the box $(i-1,j-1)$ is also in $\lm$. The same is true for the skew diagram obtained by removing the first northwest ribbon of $\lm$. Furthermore we have $\abs{nw_1(\mA)}=\l_1+l(l)-1$ and so $\abs{nw_1(\mA)}=hl_1(\mA)$.
\end{proof}

\begin{Bsp}
 If we take as example $\l=(6,2,5,3^2,2^2)$ and $\mu=(3^2,2,1)$ then:{\footnotesize
\[\l/\mu: \young(\mu\mu\mu\h\h\h,\mu\mu\mu\h\h\h,\mu\mu\h\h\h,\mu\h\h,\h\h\h,\h\h,\h\h)
\qquad (\l/h)/\mu=\young(hhhhhh,h\mu\mu\mu\h\h,h\mu\mu\mu\h,h\mu\mu,h\mu\h,h\h,h\h)=\young(\mu\mu\mu111,\mu\mu\mu1\h\h,\mu\mu11\h,\mu11,11\h,1\h,1\h)=(\l/\mu)/\{nw_1(\l/\mu)\}.\]}
Here $h$ marks the boxes in $h=(\l_1,1^{l(\l)-1})$ and $1$ the boxes in $nw_1(\l/\mu)$.
\end{Bsp}

\begin{Sa}\label{Sa:main}
 Let $\mA=\l/\mu$ be a skew diagram.

 Then $hl(\mA)=\pinw(\mA)$.
\end{Sa}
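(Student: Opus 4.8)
The plan is to prove $hl(\mA)=\pinw(\mA)$ by induction on the number of northwest ribbons, iterating the construction that Remark~\ref{bem:LRmain} and Lemma~\ref{bem-le:main} already establish for the first principal hook. The base of the argument is exactly those two earlier results: for a \emph{connected} $\mA=\l/\m$ we have $hl_1(\mA)=\l_1+l(\l)-1=\abs{nw_1(\mA)}=\pinw(\mA)_1$, and removing $nw_1(\mA)$ leaves the strictly smaller skew diagram $\bl/\m$. The first thing I would do is reduce to the connected case: if $\mA$ decomposes into disconnected pieces, then its northwest ribbons are the weak ribbons assembled from the ribbons of the pieces, and $[\mA]$ is the corresponding product $\otimes$ of the pieces' characters, so I would want to check that $\pinw$ and $hl$ both behave additively across the decomposition, reducing the general statement to the connected components (and ultimately to Theorem~\ref{Sa:discmain} when the pieces are genuine partitions).

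For the connected case the key step is to lift the first-hook result to \emph{all} principal hooks simultaneously. The idea is: among the $\nu$ with $[\nu]\in[\mA]$, to maximize $hl(\nu)$ lexicographically I first maximize $hl_1(\nu)$, which by Remark~\ref{bem:LRmain} forces $\nu=(\l_1,\bn+(1^{l-1}))$ for some $\bn$ with $[\bn]\in[\bl/\m]$, and by Lemma~\ref{sa:mainLR} the multiplicity is preserved, $c(\l;\m,\nu)=c(\bl;\m,\bn)$. So maximizing the remaining hook lengths $hl_2(\nu),hl_3(\nu),\ldots$ of $\nu$ is \emph{exactly} maximizing $hl_1(\bn),hl_2(\bn),\ldots$ of $\bn$ over characters in $[\bl/\m]$. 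By the inductive hypothesis applied to the smaller diagram $\bl/\m$ (which is $\mA$ with its first northwest ribbon removed, by Lemma~\ref{bem-le:main}) we get $hl(\bn)$ maximized to $\pinw(\bl/\m)$. Splicing back, $hl(\nu)=(hl_1(\nu),hl(\bn))=(\abs{nw_1(\mA)},\pinw(\bl/\m))$, and since the northwest ribbons of $\bl/\m$ are precisely $nw_2(\mA),nw_3(\mA),\ldots$, this is exactly $\pinw(\mA)$.

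The subtlety I would flag is that lexicographic maximization is sequential, so I must argue that the $\nu$ achieving the lexicographically largest $hl$ really does begin by maximizing $hl_1$, and that after fixing $\nu_1$ and $l(\nu)$ the passage to $\bn$ is a clean bijection that does not secretly constrain which $\bn$ can appear. Remark~\ref{bem:LRmain} gives the needed $1$--$1$ correspondence between $[\nu]\in[\mA]$ with maximal $hl_1$ and $[\bn]\in[\bl/\m]$, and Lemma~\ref{sa:mainLR} guarantees that the whole tail hook length partition of $\nu$ is literally $hl(\bn)$, so the induction closes without any leftover conditions. The main obstacle is therefore bookkeeping rather than a genuinely hard estimate: I must make sure the disconnected reduction and the "peel off $nw_1$" step interact correctly, i.e.\ that peeling the first weak ribbon of a disconnected diagram simultaneously peels the first ribbon of every connected component and that $hl$ of a product splits as the concatenation-then-sort of the pieces' hook data in a way compatible with Theorem~\ref{Sa:discmain}; once that compatibility is verified the induction is routine.
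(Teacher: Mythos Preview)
Your proposal is correct and follows essentially the same route as the paper's proof: peel off $nw_1$ in the connected case via Lemma~\ref{sa:mainLR}/Remark~\ref{bem:LRmain}/Lemma~\ref{bem-le:main}, and handle the disconnected case by decomposing each factor into irreducibles and invoking Theorem~\ref{Sa:discmain}; the paper merely packages this as a double induction on $(l(\pinw(\mA)),\text{number of components})$, which is exactly the ``interaction'' you flag, since peeling a connected diagram can produce a disconnected one. One small slip: $hl$ of a product is the \emph{componentwise sum} $hl(\mB_1)+hl(\mB_2)$ (as Theorem~\ref{Sa:discmain} states), not ``concatenation-then-sort''; with that correction your bookkeeping goes through verbatim.
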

\begin{proof}
We prove this by induction on the length of $\pinw(\mA)$ and the number proper ribbons into which the first northwest ribbon decays. Obviously this is true for the empty skew diagram, $l(\pinw(\mA))=0$, but we also prove this for $l(\pinw(\mA))=1$.

For $l(\pinw(\mA))=1$, $\mA$ is either a ribbon or a weak ribbon. Suppose $\mA$ is a proper ribbon. In this case we can use Lemma~\ref{bem-le:main} to get $hl_1(\mA)=\pinw(\mA)_1$. Furthermore $\pinw(\mA)_2=0$ because of the correspondence given in Remark~\ref{bem:LRmain}.

So now suppose that the claim holds if $l(\pinw(\mA))=1$ and $\mA$ decays into $j-1$ proper ribbons. Suppose now that $\mA$ decays into $j$ proper ribbons $\mB_i$ and so
\[ [\mA]=[\mB_1]\otimes[\mB_2]\otimes\cdots\otimes[\mB_j]=[\mB_1]\otimes\left([\mB_2]\otimes\cdots\otimes[\mB_j]\right). \]

By induction we know, that $hl(\mB_1)=\pinw(\mB_1)$ and $hl(\mC)=\pinw(\mC)$ with $\mC=\bigotimes_{2\leq i \leq j} \mB_i$.

We decompose $[\mB_1]=\sum_a [\n_a]$ and $[\mC]=\sum_b [\xi_b]$ into sums of irreducible characters $[\n_a]$ resp.\ $[\xi_b]$ with $\n_a$ and $\xi_b$ proper partitions. We now have $[\mA]=\sum_{a,b}[\n_a]\otimes[\xi_b]$. By Theorem~\ref{Sa:discmain} we have $hl([\n_a]\otimes[\xi_b])=hl(\n_a)+hl(\xi_b)$. So $hl(\mA)=\max_{a,b}\bigl(hl(\n_a)+hl(\xi_b)\bigr)$ but since $\n_a$ and $\xi_b$ are independent we have \[hl(\mA)=\max_{a,b}\bigl(hl(\n_a)+hl(\xi_b)\bigr)=\max_a hl(\n_a)+\max_b hl(\xi_b) = hl(\mB_1)+hl(\mC). \] But by induction $hl(\mB_1)=\pinw(\mB_1)$ and $hl(\mC)=\pinw(\mC)$ and so in total $hl(\mA)=\pinw(\mB_1)+\pinw(\mC)$. But by the definitions of the northwest ribbons and $\pinw$ we have $\pinw(\mA)=\pinw(\mB_1)+\pinw(\mC)$ if $\mA=\mB_1\otimes\mC$ as in this case. This gives finally $hl(\mA)=\pinw(\mA)$ for $l(\pinw(\mA))=1$.

So let us assume that the claim holds for $l(\pinw(\mA))=i-1$. Suppose now that $l(\pinw(\mA))=i$.

Let $\mA$ be connected. Lemma~\ref{sa:mainLR} tells us that $\left|nw_1(\mA)\right|=hl_1(\mA)$. Since $(\mA)/nw_1(\mA)$ has $i-1$ northwest ribbons we can use induction and the $1-1$-correspondence given in Remark~\ref{bem:LRmain} and the claim holds true.

Let us now assume that the claim holds true if $l(\pinw(\mA))=i$ and $\mA$ decays into $j-1$ disconnected skew diagrams. Suppose $\mA$ decays into $j$ disconnected skew diagrams $\mB_i$. We can use the same argument as in the weak ribbon case (in the above argument we never used the fact that the $\mB_i$ are ribbons) to get $hl(\mA)=\pinw(\mA)$.
\end{proof}

\begin{Bem}\label{bem:gammaskew}
The above proof tells us also the exact shape of the $\nu$ with $hl(\nu)=hl(\mA)$ and $[\nu]\in[\mA]$. Again, as in Remark~\ref{bem:gamma} we construct a partition $\gamma$ such that the box $(i,i)$ has the same arm resp.\  leg length as $nw_i(\mA)$ and the box $(i,i)$ is in $\gamma$ if $\pinw(\mA)_i\not=0$. Let $nw_j(\mA)$ decay into $k_j$ disconnected ribbons. To obtain the partitions $\nu$ we have to add for each $j$ $k_j-1$ boxes to the $j$th row or column in $\gamma$. The number of ways to obtain $\nu$ from $\gamma$ by adding these boxes is then the multiplicity with which $[\nu]$ appears in $[\mA]$.
\end{Bem}
\begin{proof}
In the proof of Theorem~\ref{Sa:main} we have seen how to get the characters $[\nu]\in[\mA]$ with $hl(\n)=hl(\mA)$. To construct these $\n$ we have to maximize the first principal hook length and then the second and so on. Suppose we want to maximize the $j$-th principal hook length while the $1$-st to $j-1$-th principal hook lengths of $\n$ are maximal. If the $j$-th northwest ribbon decays into $k_j$ proper ribbons then the skew diagram obtained by removing the first $j-1$ northwest ribbons of $\mA$ also decays into $k_j$ disconnected skew diagrams $\mB_i$. We have then to calculate all the products of characters in different $[\mB_i]$ having maximal hook length partitions. But from Remark~\ref{bem:gamma} we know how to maximize the first hook length of partitions whose corresponding character is in this product (and so maximize the $j$-th hook length of partitions whose corresponding character is in $[\mA]$). We have to construct the hook having as arm resp.\ leg length the sum of the arm resp.\ leg lengths of the first principal hooks of the partitions whose corresponding characters are multiplied and then add a box to either the first row or column and then iterate this.
\end{proof}

Since there are $\binom{a}{b}$ ways to choose from $a$ boxes $b$ boxes and put them into a row and put the other $a-b$ boxes into a column we get the following Proposition:

\begin{pro}\label{Le-Prop:c-value}
 Let $nw_i(\l/\mu)$ decay into $k_i$ disconnected ribbons.

Then there are $\prod_i k_i$ different characters $[\nu]\in[\l/\mu]$ with $hl(\nu)=hl(\l/\mu)$ and we have
\[c(\l;\mu,\nu)=\prod_i \binom{k_i -1}{\a_i} \]
where $\a_i$ is the number of boxes placed in the $i$-th row in the construction of $\nu$ from $\gamma$ (as in Remark~\ref{bem:gammaskew}).
\end{pro}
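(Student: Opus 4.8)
The plan is to reduce the entire statement to the combinatorial description of the maximal–hook–length partitions already supplied by Theorem~\ref{Sa:main} and Remark~\ref{bem:gammaskew}, and then to carry out two independent counts over the principal hooks: the number of admissible shapes $\nu$, and, for each fixed such $\nu$, the multiplicity $c(\l;\mu,\nu)$. Everything after invoking Remark~\ref{bem:gammaskew} should be binomial bookkeeping, so the real work is only in making that bookkeeping faithful to the LR multiplicities.

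First I would recall from Remark~\ref{bem:gammaskew} that every $\nu$ with $hl(\nu)=hl(\mA)$ arises from the fixed partition $\gamma$ by adjoining, for each index $i$, exactly $k_i-1$ boxes to the $i$th row or $i$th column, and that the multiplicity $c(\l;\mu,\nu)$ equals the number of such box placements yielding that particular $\nu$. Since boxes added at the $i$th principal hook alter only the lengths of the $i$th row and $i$th column of $\gamma$, the resulting shape is completely determined by the number $\a_i$ of boxes sent to the $i$th row (the remaining $k_i-1-\a_i$ going to the column); conversely $\a_i$ is read off from $\nu$ as the excess of its $i$th row length over that of $\gamma$. Invoking the partition–validity argument from the proof of Theorem~\ref{Sa:discmain}, any choice $0\le\a_i\le k_i-1$ still produces a genuine partition, so the admissible $\nu$ correspond bijectively to the vectors $(\a_i)$ with $0\le\a_i\le k_i-1$. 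As these coordinates are independent across the finitely many nonempty hooks, this already yields $\prod_i k_i$ distinct characters $[\nu]$.

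For the multiplicity I would fix such a $\nu$, hence the split $(\a_i)$, and count the box placements producing it hook by hook. At a single hook $i$ the $k_i-1$ boxes are precisely the $k_i-1$ binary row–or–column decisions made when the $k_i$ disconnected pieces surviving after removal of the first $i-1$ northwest ribbons are multiplied together by the iterated two–factor construction of Theorem~\ref{Sa:discmain} and Remark~\ref{bem:gamma}; the number of these decisions placing exactly $\a_i$ of the boxes in the row is $\binom{k_i-1}{\a_i}$. Independence across hooks then gives $c(\l;\mu,\nu)=\prod_i\binom{k_i-1}{\a_i}$, as claimed. A reassuring consistency check is that summing over all $\nu$ recovers $\sum_\nu c(\l;\mu,\nu)=\prod_i 2^{k_i-1}$, the total multiplicity of the maximal–hook characters.

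The main obstacle I anticipate is justifying that the $k_i-1$ placements at a fixed hook are genuinely distinguishable, so that two decision sequences with the same total $\a_i$ but differing histories correspond to distinct LR-tableaux and the count in Remark~\ref{bem:gammaskew} really equals $\binom{k_i-1}{\a_i}$ rather than $1$. To secure this I would unwind the inductive, left–associated product in the proof of Theorem~\ref{Sa:main} and track each binary choice through the $1-1$-correspondence of Remark~\ref{bem:LRmain} (equivalently Lemma~\ref{sa:mainLR}), verifying that each merge step leaves an identifiable imprint on the filling. Once the distinguishability of the boxes is established, the remaining computation is purely the product of binomial coefficients.
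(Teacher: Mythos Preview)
Your proposal is correct and follows essentially the same route as the paper: the paper's entire proof is the single sentence preceding the proposition, observing that once Remark~\ref{bem:gammaskew} is in hand, the count reduces to the binomial identity that there are $\binom{k_i-1}{\a_i}$ ways to choose which of the $k_i-1$ boxes go to the $i$th row. You are simply more explicit than the paper about why the $k_i-1$ choices at each hook are distinguishable (tracing them back through the iterated two-factor merges of Theorem~\ref{Sa:discmain}), a point the paper leaves implicit in the proof of Remark~\ref{bem:gammaskew}.
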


We give an example in which two of the $[\nu]\in[\mA]$ have multiplicity $2$. We take $\mA=(8^2,7,4,3^2)/(4,3,2)$. We have the following northwest ribbon decomposition, where the boxes are labeled $i$ if they are in $nw_i(\mA)$. We notice that $k_1=1,k_2=3,k_3=2$ and so expect $6$ different characters $\n_i$ with $hl(\n_i)=hl(\mA)$ and the value $2$ as highest multiplicity of one of these characters.
{\footnotesize\[\young(::::1111,:::11222,::11223,1112,122,123)\]}

If we follow the proof from Theorem~\ref{Sa:main}, we remove the first northwest ribbon and obtain $\mB=(7,6,3,2,2)/(4,3,2)$, which decomposes into $\mC_1=(2^2),\mC_2=(1)$ and $\mC_3=(4,3)/(1)$. To calculate $\mu$ with $[\mu]\in[\mB]$ and $hl(\mu)=hl(\mB)$ we have to multiply the characters with the maximal hook length partitions in the $[\mC_i]$, but for $i=1,2$ the $[\mC_i]$ are already those characters. To obtain the characters with maximal hook length partition in $[\mC_3]$ we can use Theorem~\ref{Sa:main} again. Removing the first northwest ribbon of $\mC_3$ gives the partition $(1)$ so the character corresponding to $\a=(4,2)$ is the only one with maximal hook length partition in $[\mC_3]$.

So we have to calculate the product knowing that we are only interested in those $[\mu]$ with $hl(\mu)=hl(\mB)$. So we first multiply $[1]$ with $[2^2]$ which gives us $[1]\otimes[2,2]=[3,2]+[2,2,1]$ where we had the choice to maximize the first row or column. To obtain the $[\mu]$ we now multiply both with $[4,2]$ and obtain:
\[[3,2]\otimes[4,2]=[7,3,1]+[7,2,2]+[6,3,1,1]+[6,2,2,1]+14\textnormal{ other characters} \]
\[[2,2,1]\otimes[4,2]=[6,3,1,1]+[6,2,2,1]+[5,3,1,1,1]+[5,2,2,1,1]+10\textnormal{ other characters} \] where we had the choice to maximize the first row or column and the second row or column. We also see that $[6,3,1,1]$ and $[6,2,2,1]$ appear with multiplicity $2$ in $[\mB]$, because we maximized once the first row and once the first column.

This tells us that the characters $[\nu]\in[\mA]$ with $hl(\nu)=hl(\mA)$ are those corresponding to the following partitions:
{\footnotesize\[\yng(8,8,4,2,1,1)\qquad\yng(8,8,3,3,1,1)\qquad\yng(8,7,4,2,2,1)\]}
{\footnotesize\[\yng(8,7,3,3,2,1)\qquad\yng(8,6,4,2,2,2)\qquad\yng(8,6,3,3,2,2).\]}
Here $[8,7,4,2,2,1]$ and $[8,7,3,3,2,1]$ appear with multiplicity $2$. Furthermore, since $[\mB]$ has $25$ different irreducible characters, we know that in $[\mA]$ there are $25$ different irreducible characters $[\xi]$ with $hl_1(\xi)=hl_1(\mA)$.

If we would follow the construction in Remark~\ref{bem:gammaskew} we construct first $\gamma$ with
{\footnotesize\[\gamma=\yng(8,6,3,2,1,1)\]}
and then add $k_1-1=0$ boxes to the first row or column, $k_2-1=2$ boxes to the second row or column and $k_3-1=1$ boxes to the third row or column with the same result.

We now want to show that the minimal Durfee size of all $[\mu]\in[\mA]$ is $l(hl(\mA))$. For this we need the following:

\begin{Le}
 Let $\mA$ be a skew diagram and set $H(i,j)=a$ if the box $(i,j)$ belongs to $nw_a(\mA)$.

If $H(i+1,j+1)>1$ then $H(i+1,j+1)=H(i,j)+1$.
\end{Le}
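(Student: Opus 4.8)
The plan is to show that the function $H$ has a simple closed form in terms of \emph{northwest diagonal depth}: for each box $(i,j)\in\mA$ let $d(i,j)$ be the largest $D\ge1$ such that $(i-k,j-k)\in\mA$ for all $0\le k\le D-1$ while $(i-D,j-D)\notin\mA$; that is, $d(i,j)$ counts the consecutive boxes of $\mA$ one meets walking northwest along the diagonal from $(i,j)$. I claim $H(i,j)=d(i,j)$ for every box of $\mA$, and the Lemma is then immediate: if $H(i+1,j+1)=d(i+1,j+1)>1$, the diagonal through $(i+1,j+1)$ contains at least the two boxes $(i+1,j+1)$ and $(i,j)$, so in particular $(i,j)\in\mA$ and $d(i,j)=d(i+1,j+1)-1$, giving $H(i+1,j+1)=H(i,j)+1$.

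First I would record the box-level reformulation of Lemma~\ref{bem-le:main}: a box $(i,j)\in\mA$ lies in $nw_1(\mA)$ exactly when $(i-1,j-1)\notin\mA$, equivalently $\mA\setminus nw_1(\mA)=\{(i,j)\in\mA:(i-1,j-1)\in\mA\}$. For connected $\mA$ this is precisely what the proof of Lemma~\ref{bem-le:main} establishes. To extend it to disconnected $\mA=\l/\mu$ I would first verify the elementary fact that whenever $(i,j)$ and $(i-1,j-1)$ both lie in $\mA$ they lie in the same connected component. Writing out $\mu_i<j\le\l_i$ and $\mu_{i-1}<j-1\le\l_{i-1}$ and using $\mu_{i-1}\ge\mu_i$ and $\mu_{i-1}\le\l_{i-1}$, one checks that if both intermediate boxes $(i-1,j)$ and $(i,j-1)$ were absent then $\mu_{i-1}=\l_{i-1}$, making row $i-1$ empty and contradicting $(i-1,j-1)\in\mA$; hence one of the intermediate boxes belongs to $\mA$ and links the two diagonal boxes. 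Consequently the absence of $(i-1,j-1)$ from the component of $(i,j)$ is the same as its absence from all of $\mA$, so the description of $nw_1$ holds verbatim for arbitrary skew diagrams.

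Next I would prove $H(i,j)=d(i,j)$ by induction on $a$, the claim at stage $a$ being that for \emph{every} skew diagram one has $nw_a=\{(i,j):d(i,j)=a\}$. The base case $a=1$ is the reformulation above, since $(i-1,j-1)\notin\mA$ is exactly $d(i,j)=1$. For the step, set $\mA^{(1)}=\mA\setminus nw_1(\mA)$, which is again a skew diagram and satisfies $\mA^{(1)}=\{(i,j)\in\mA:(i-1,j-1)\in\mA\}=\{d(i,j)\ge2\}$. For a box of $\mA^{(1)}$ its diagonal depth \emph{inside} $\mA^{(1)}$ equals $d(i,j)-1$: writing $d(i,j)=D$, a box $(i-k,j-k)$ of the diagonal survives into $\mA^{(1)}$ iff $(i-k-1,j-k-1)\in\mA$, which holds precisely for $0\le k\le D-2$. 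Since $nw_a(\mA)=nw_{a-1}(\mA^{(1)})$ for $a\ge2$ by definition, the induction hypothesis applied to $\mA^{(1)}$ yields $(i,j)\in nw_a(\mA)$ iff $d_{\mA^{(1)}}(i,j)=a-1$ iff $d(i,j)=a$, completing the induction.

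The main obstacle is the bookkeeping in this last step: one must confirm that stripping off $nw_1$ lowers the diagonal depth by \emph{exactly} one at every box simultaneously, and that the characterization of $nw_1$ survives the passage between connected and disconnected diagrams (so that a box just outside $\mA$ is detected regardless of how the components sit). Once the box-level description of $nw_1$ and the component fact are in place, however, both points reduce to the short counting argument above, and the Lemma follows at once from $d(i,j)=d(i+1,j+1)-1$.
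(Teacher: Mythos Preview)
Your argument is correct and in fact establishes something stronger than the lemma: the closed form $H(i,j)=d(i,j)$, the northwest diagonal depth. The paper's own proof is much terser and does not pass through this closed form. It fixes $b=H(i+1,j+1)$, strips the first $b-2$ ribbons to obtain $\mB$, and then applies the box-level characterization of $nw_1$ twice: once to see that $(i,j)\notin\mB$ would force $(i+1,j+1)\in nw_1(\mB)=nw_{b-1}(\mA)$, a contradiction; and again (implicitly, on $\mB\setminus nw_1(\mB)$) to conclude that $(i,j)$ must lie in $nw_1(\mB)=nw_{b-1}(\mA)$.

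Both arguments rest on the same key ingredient, namely that $(i,j)\in nw_1(\mC)$ iff $(i-1,j-1)\notin\mC$, and you are right to make this explicit and to extend it carefully to disconnected diagrams (the paper's Lemma~\ref{bem-le:main} is stated only for connected $\lm$). What your route buys is a transparent global description of $H$; what the paper's route buys is brevity, since it never needs the full induction on $a$. One small remark: the identity $nw_a(\mA)=nw_{a-1}(\mA^{(1)})$ is not literally the definition in the disconnected case (the paper defines $nw_a$ componentwise), but it follows immediately once one unwinds both sides over connected components, so your use of it is fine.
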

\begin{proof}
Let $H(i+1,j+1)=b$ and let $\mB$ be the skew diagram where the first $b-2$ northwest ribbons are removed from $\mA$. If the box $(i,j)$ is not in $\mB$ then the box $(i+1,j+1)$ would belong to the $b-1$th northwest ribbon of $\mA$ which is not the case. So $(i,j)$ is in $\mB$ and so must belong to the $b-1$th northwest ribbon of $\mA$ and so we have $H(i,j)=b-1=H(i+1,j+1)-1$.
\end{proof}

\begin{pro}\label{Le-prop:minD}
Let $[\mA]$ be a skew character.

The $[\nu]\in[\mA]$ with $hl(\nu)=hl(\mA)$ have minimal Durfee size of all $[\mu]\in[\mA]$. In particular the minimal Durfee size of a character $[\mu]\in[\mA]$ is $l(hl(\mA))$.
\end{pro}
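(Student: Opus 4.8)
The plan is to establish two inequalities that together pin down the minimal Durfee size. Recall that for any partition $\nu$ we have $d(\nu)=l(hl(\nu))$, so controlling the number of nonzero principal hook lengths controls the Durfee size. Since Theorem~\ref{Sa:main} gives $hl(\mA)=\pinw(\mA)$ and for the $[\nu]$ with $hl(\nu)=hl(\mA)$ we have $d(\nu)=l(hl(\nu))=l(hl(\mA))=l(\pinw(\mA))$, it suffices to show that no $[\mu]\in[\mA]$ has Durfee size strictly smaller than $l(\pinw(\mA))$. So the whole content is the lower bound $d(\mu)\geq l(\pinw(\mA))$ for every $[\mu]\in[\mA]$, after which the characters with maximal hook length partition are seen to attain this bound, making it both the minimum and equal to $l(hl(\mA))$.

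The key step is the lower bound. First I would reduce to the connected case: if $\mA$ decays into disconnected pieces, then $[\mA]$ is a product of the corresponding characters, $\pinw(\mA)$ is the sum of the $\pinw$ of the pieces, and any $[\mu]$ in the product satisfies $\mu\supseteq$ (a shifted overlay of constituents), so the Durfee size of $\mu$ is at least the sum of the Durfee sizes of the factors; combined with Remark~\ref{bem:minDdisc} this handles the reduction. For connected $\mA$ with $l(\pinw(\mA))=i$, I would argue by induction on $i$. The engine is Lemma~\ref{bem-le:main} and Remark~\ref{bem:LRmain}: removing $nw_1(\mA)$ yields $\bl/\mu$, and by the $1$--$1$ correspondence every $[\mu]\in[\mA]$ with $\mu_1+l(\mu)-1$ maximal comes from some $[\bmu]\in[\bl/\mu]$ via $\mu=(\l_1,\bmu+(1^{l-1}))$. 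For a general $[\mu]\in[\mA]$ one can still strip the first principal hook and land in $[\bl/\mu]$, so by induction $d$ of the stripped partition is at least $l(\pinw(\bl/\mu))=i-1$, and since stripping one principal hook drops the Durfee size by at most one (and drops $l(\pinw)$ by exactly one), we recover $d(\mu)\geq i=l(\pinw(\mA))$.

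The cleanest way to make the inductive step precise is via the $H$-function from the preceding Lemma. That Lemma says $H(i+1,j+1)>1$ forces $H(i+1,j+1)=H(i,j)+1$, which is exactly a \emph{diagonal chaining} statement: the northwest ribbons climb one diagonal at a time. I would use it to produce, for any LR-filling realizing $[\mu]\in[\mA]$, a chain of boxes along a diagonal of $\mu$ of length $l(\pinw(\mA))$, witnessing a $d\times d$ square with $d=l(\pinw(\mA))$ inside $\mu$ and hence $d(\mu)\geq l(\pinw(\mA))$. Concretely, because $nw_a(\mA)$ is nonempty for each $a\leq l(\pinw(\mA))$ and the $H$-chaining links box $(i,j)\in nw_a$ to a box $(i+1,j+1)\in nw_{a+1}$, one obtains a staircase of boxes $(i_1,i_1),(i_2,i_2),\ldots$ forcing the Durfee square to be at least this size in any $\nu$ containing $\mA$ up to the LR-combinatorics.

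I expect the main obstacle to be the lower bound in the connected case, specifically translating the diagonal-chaining property of $H$ into a genuine lower bound on $d(\mu)$ for \emph{every} constituent $[\mu]$, not merely those of maximal first hook length. The subtlety is that a generic $[\mu]\in[\mA]$ need not arise from the hook-stripping correspondence of Remark~\ref{bem:LRmain}, so I cannot directly apply that bijection; I must instead argue that any LR-tableau of shape $\nu/\mu$ and content forced by $\mA$ still exhibits the diagonal chain, so that $(d^d)\subseteq\nu$ with $d=l(\pinw(\mA))$. Making this robust against arbitrary LR-fillings — rather than the specific maximizing ones — is where the care is needed; the disconnected reduction and the attainment of the bound by the maximal-hook partitions are then routine given Remark~\ref{bem:minDdisc} and Theorem~\ref{Sa:main}.
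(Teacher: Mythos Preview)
You have correctly identified that the whole content is the lower bound $d(\mu)\geq l(\pinw(\mA))$ for every $[\mu]\in[\mA]$, and you have correctly singled out the $H$-lemma as the relevant tool. But you are missing the one observation that makes the paper's proof a two-liner, and in its absence you are forced into an induction that (as you yourself note) does not close.

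The paper's argument is this. Set $h=l(\pinw(\mA))$ and pick any box $(i,j)\in nw_h(\mA)$. Iterating the $H$-lemma backwards gives $(i-1,j-1)\in nw_{h-1}(\mA),\ldots,(i-h+1,j-h+1)\in nw_1(\mA)$, so the entire diagonal of length $h$ lies in $\mA$. Because $\mA=\l/\m$ is a \emph{skew} shape, having both the NW box $(i-h+1,j-h+1)$ and the SE box $(i,j)$ in $\mA$ forces the whole square $(h^h)$ with these as opposite corners to lie in $\mA$: every box of the square is $\leq(i,j)$ coordinatewise, hence in $\l$, and $\geq(i-h+1,j-h+1)$ coordinatewise, hence not in $\m$. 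Thus $(h^h)\subseteq\mA$ as a subdiagram. Now one invokes the elementary fact that if a skew diagram contains the square $(h^h)$, then every $[\mu]\in[\mA]$ satisfies $(h^h)\subseteq\mu$, giving $d(\mu)\geq h$ immediately. No induction, no reduction to the connected case, no analysis of arbitrary LR-fillings.

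Your plan diverges at exactly this point. You try to chain the $H$-lemma \emph{forward} and then transport the resulting diagonal into each constituent $\nu$ via LR-combinatorics; the paper instead chains \emph{backward} from a box of $nw_h$, lands the square inside $\mA$ itself, and then uses a single blanket fact about constituents of skew characters containing a square. Your alternative induction via hook-stripping has the gap you flag: the bijection of Remark~\ref{bem:LRmain} only covers constituents with maximal first principal hook length, so ``strip the first principal hook and land in $[\bl/\m]$'' is not available for a general $[\mu]\in[\mA]$, and there is no evident repair. The disconnected reduction you propose is also unnecessary once the square is located in $\mA$; that argument is indifferent to connectedness.
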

\begin{proof}
 Set $h=l(hl(\mA))$. The previous lemma tells us that if a box  belongs to $nw_h(\mA)$ then it lies in the southeastern corner of a square $(h^h)$ which lies completely in $\mA$. But if the square $(h^h)$ lies in $\mA$ then the square $(h^h)$ lies also in all partitions $\m$ whose corresponding character appears in $[\mA]$  and so $h$ is a lower bound for the Durfee size of a character $[\mu]\in[\mA]$.
But since the $[\nu]\in[\mA]$ with $hl(\nu)=hl(\mA)$ have Durfee size $d(\nu)=h$ the claim holds true.
\end{proof}

\section{Maximal Durfee Sizes In Skew Characters}\label{Se:maxD}
In this section we use Theorem~\ref{Sa:main} and \cite[Theorem 4.2]{Gut} to determine for a product of two characters and for some special skew characters the maximal Durfee size of characters and explicitly construct some characters with maximal Durfee size.

\cite[Theorem 4.2]{Gut} states the following:

\begin{Sa}
\label{Sa:Mainskewschub}
Let $\mu,\l$ be partitions with $\mu\subseteq\l\subseteq (k^l)$ with some fixed integers $k,l$. Set $\l^{-1}=(k^l)/\l$.

Then: The coefficient of $[\a]$ in $[\l/\mu]$ equals the coefficient of $[\a^{-1}]=[(k^l)/\a]$ in $[\mu]\star_{(k^l)}[\l^{-1}].$
\end{Sa}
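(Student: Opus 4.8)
Unwinding the definitions, the assertion is the single symmetry of Littlewood--Richardson coefficients
\[ c(\l;\m,\a)=c(\a^{-1};\m,\l^{-1}), \]
because the coefficient of $[\a]$ in $[\l/\m]$ is $c(\l;\m,\a)$, while by the definition of the Schubert product the coefficient of $[\a^{-1}]$ in $[\m]\star_{(k^l)}[\l^{-1}]=\sum_{\nu\subseteq(k^l)}c(\nu;\m,\l^{-1})[\nu]$ is $c(\a^{-1};\m,\l^{-1})$ (here $\a^{-1}\subseteq(k^l)$, so this term genuinely occurs). If $\a\not\subseteq(k^l)$ both sides vanish, so I may assume $\a\subseteq(k^l)$.

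The plan is to realize both sides as two evaluations of one totally symmetric quantity. First I would record the pairing identity
\[ c((k^l);\n,\xi)=\delta_{\xi,\n^{-1}}\qquad(\n,\xi\subseteq(k^l)), \]
which holds because the $180^\circ$ rotation of the skew diagram $(k^l)/\n$ is the straight shape $\n^{-1}$; hence $[(k^l)/\n]=[((k^l)/\n)^\circ]=[\n^{-1}]$ is irreducible by rotation invariance, and the coefficient of $[\xi]$ in it is $\delta_{\xi,\n^{-1}}$. Crucially $c((k^l);\gamma,\cdot)=0$ unless $\gamma\subseteq(k^l)$, so in the computations below any partition lying outside the rectangle contributes nothing and the untruncated product may be used in place of $\star_{(k^l)}$.

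Next I would introduce $N$, the coefficient of $[(k^l)]$ in the triple product $[\m]\otimes[\a]\otimes[\l^{-1}]$; since $\otimes$ is commutative and associative, $N$ is symmetric in the three factors. Associating the first two factors and applying the pairing identity with $\n=\gamma$, $\xi=\l^{-1}$ (so $\gamma^{-1}=\l^{-1}\iff\gamma=\l$) would give
\[ N=\sum_\gamma c(\gamma;\m,\a)\,c((k^l);\gamma,\l^{-1})=\sum_\gamma c(\gamma;\m,\a)\,\delta_{\gamma,\l}=c(\l;\m,\a). \]
Associating instead the last two factors and pairing off $[\a]$ would give
\[ N=\sum_\gamma c(\gamma;\m,\l^{-1})\,c((k^l);\a,\gamma)=\sum_\gamma c(\gamma;\m,\l^{-1})\,\delta_{\gamma,\a^{-1}}=c(\a^{-1};\m,\l^{-1}). \]
Comparing the two evaluations of $N$ yields the claim. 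Geometrically $N$ is the triple intersection number $\int_{Gr(l,\mathbb{C}^n)}\sigma_\m\sigma_\a\sigma_{\l^{-1}}$ with $n=k+l$, the pairing identity is Poincaré duality $\sigma_\n\cdot\sigma_{\n^{-1}}=\sigma_{(k^l)}$, and the symmetry of $N$ is the commutativity of the cup product.

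The main obstacle is not the algebra, which is short, but the complementation bookkeeping: one must verify the rotation fact underlying the pairing identity, check that it forces all out-of-rectangle terms to vanish so that $\otimes$ may legitimately replace $\star_{(k^l)}$, and---most importantly---choose the regrouping that dualizes $\a$ rather than $\m$, since pairing the wrong factor against the top class $[(k^l)]$ would instead produce $c(\m^{-1};\a,\l^{-1})$. Once the correct factor is singled out, the total symmetry of $N$ delivers the result immediately.
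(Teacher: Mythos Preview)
Your argument is correct: the identity $c(\l;\m,\a)=c(\a^{-1};\m,\l^{-1})$ follows exactly as you indicate, by computing the multiplicity of $[(k^l)]$ in the triple product $[\m]\otimes[\a]\otimes[\l^{-1}]$ in two different groupings and using the Poincar\'e-duality pairing $c((k^l);\nu,\xi)=\delta_{\xi,\nu^{-1}}$, which in turn comes from $[(k^l)/\nu]=[((k^l)/\nu)^\circ]=[\nu^{-1}]$. The bookkeeping about out-of-rectangle terms and the choice of which factor to dualize is handled correctly.

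As for comparison with the paper: note that the present paper does \emph{not} prove this theorem at all---it is quoted verbatim as \cite[Theorem~4.2]{Gut} and used as a black box. So there is no ``paper's own proof'' to compare against here. Your argument is the standard cohomological/associativity proof of this classical symmetry of LR-coefficients inside a rectangle, and it is self-contained given only the facts already recalled in Section~2 (rotation invariance $[\mA]=[\mA^\circ]$ and the symmetry $c(\l;\m,\nu)=c(\l;\nu,\m)$).
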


We will use that for $k\geq\mu_1+\nu_1,l\geq l(\mu)+l(\nu)$ the Schubert-Product $[\mu]\star_{(k^l)}[\nu]$ is the ordinary product $[\mu]\otimes[\nu]$.

Let us associate a skew diagram $\mA=\left((m^m)/\a\right)^\circ)/\b$ to partitions $\a,\b$ with $m=\max(\a_1+\b_1,l(\a)+l(\b))$. To obtain $\mA$ we remove from the square $(m^m)$ the partition $\b$ as usual and the partition $\a$ rotated by $180^\circ$ from the lower right corner.

Theorem~\ref{Sa:Mainskewschub} tells us that characters $[\nu]$ in $[\mA]$ correspond to characters $[(m^m)/\nu]$ in the product $[\a]\otimes[\b]$. This means that characters with maximal Durfee size in the product $[\a]\otimes[\b]$ correspond to the characters with minimal Durfee size in its associated skew character $[\mA]$. But from the previous section we know some characters with minimal Durfee size.

So from Theorem~\ref{Sa:main} we get:

\begin{pro}
 Let $\a,\b$ be partitions, $m=\max(\a_1+\b_1,l(\a)+l(\b)),\mA=\left((m^m)/\a\right)^\circ/\b$.

Then for the product $[\a]\otimes[\b]$:
\begin{enumerate}
 \item $d([\a]\otimes[\b])=m-l(hl(\mA))$.
 \item Let $nw_i(\mA)$ decompose into $k_i$ disconnected ribbons. Then there are at least $\prod_i k_i$ different characters with maximal Durfee size in $[\a]\otimes[\b]$ and the highest multiplicity of a character with maximal Durfee size is at least $\prod_i\binom{k_i-1}{\lfloor \frac{k_i-1}{2} \rfloor}$.
 \item If $[\nu]\in[\mA]$ with $hl(\nu)=hl(\mA)$ then $[\nu^{-1}]=[(m^m)/\nu]\in[\a]\otimes[\b]$ has maximal Durfee size in $[\a]\otimes[\b]$.
\end{enumerate}
\end{pro}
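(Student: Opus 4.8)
The plan is to transfer everything through the skew/Schubert duality already set up in the discussion preceding the statement, and then to exploit the elementary fact that complementation inside the square $(m^m)$ turns the Durfee size $d$ into $m-d$.

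First I would make the correspondence precise. Write $\l=((m^m)/\a)^\circ$; since the $180^\circ$ rotation of the complement of a partition in a rectangle is again a partition, $\l$ is a genuine partition with $\l_i=m-\a_{m+1-i}$, and $\mA=\l/\b$. The choice $m=\max(\a_1+\b_1,l(\a)+l(\b))$ guarantees $\b_i+\a_{m+1-i}\le \b_1+\a_1\le m$, hence $\b\subseteq\l\subseteq(m^m)$, so Theorem~\ref{Sa:Mainskewschub} applies with $k=l=m$ and $\m=\b$. It yields that the coefficient of $[\n]$ in $[\mA]=[\l/\b]$ equals the coefficient of $[(m^m)/\n]$ in $[\b]\star_{(m^m)}[\l^{-1}]$, where $[\l^{-1}]=[(m^m)/\l]=[\a]$ because $((m^m)/\l)^\circ=\a$ and $[\mD]=[\mD^\circ]$. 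Since $m\ge \a_1+\b_1$ and $m\ge l(\a)+l(\b)$, this Schubert product equals the ordinary product, so the coefficient of $[\n]$ in $[\mA]$ equals the coefficient of $[(m^m)/\n]$ in $[\a]\otimes[\b]$. Thus $\n\mapsto ((m^m)/\n)^\circ$ is a multiplicity-preserving bijection between the constituents of $[\mA]$ and those of $[\a]\otimes[\b]$.

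The one genuinely new ingredient is the behaviour of the Durfee size under this bijection, and I expect it to be the only real (if elementary) obstacle. I would prove that for any $\n\subseteq(m^m)$ one has $d\bigl(((m^m)/\n)^\circ\bigr)=m-d(\n)$. The idea is that $d(\n)$ counts the diagonal boxes $(i,i)$ contained in $\n$, namely the indices with $\n_i\ge i$; complementing in the square replaces this set by its complement among the $m$ diagonal boxes, and the central $180^\circ$ rotation preserves the main diagonal. Concretely, writing $\rho=((m^m)/\n)^\circ$ with $\rho_i=m-\n_{m+1-i}$, the condition $\rho_i\ge i$ reads $\n_{m+1-i}\le m-i$, i.e.\ $\n_j<j$ for $j=m+1-i$, whence $d(\rho)=\#\{j:\n_j<j\}=m-d(\n)$.

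Finally I would assemble the three items. By this Durfee identity, maximizing the Durfee size over constituents of $[\a]\otimes[\b]$ is the same as minimizing $d(\n)$ over constituents of $[\mA]$; Proposition~\ref{Le-prop:minD} evaluates the latter minimum as $l(hl(\mA))$, giving $d([\a]\otimes[\b])=m-l(hl(\mA))$, which is (1). For (3), the same proposition says the minimum is attained exactly by the $[\n]\in[\mA]$ with $hl(\n)=hl(\mA)$; any such $\n$ has $d(\n)=l(hl(\mA))$, so its image $[(m^m)/\n]$ has the maximal Durfee size $m-l(hl(\mA))$ in $[\a]\otimes[\b]$. For (2), Proposition~\ref{Le-Prop:c-value} produces $\prod_i k_i$ partitions $\n$ with $hl(\n)=hl(\mA)$, which the bijection sends to $\prod_i k_i$ distinct characters of maximal Durfee size; since the bijection preserves multiplicities, their multiplicities are the values $\prod_i\binom{k_i-1}{\a_i}$ of that proposition, maximized by taking $\lfloor (k_i-1)/2\rfloor$ boxes in the $i$-th row to give $\prod_i\binom{k_i-1}{\lfloor (k_i-1)/2\rfloor}$. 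Both quantities in (2) remain lower bounds (``at least'') because constituents of $[\mA]$ that attain the minimal Durfee size $l(hl(\mA))$ without having maximal hook length partition may yield additional characters of maximal Durfee size in $[\a]\otimes[\b]$.
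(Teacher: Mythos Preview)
Your proof is correct and follows exactly the route the paper takes: apply Theorem~\ref{Sa:Mainskewschub} with $k=l=m$ to translate constituents of $[\a]\otimes[\b]$ into constituents of $[\mA]$, use that complementation in $(m^m)$ swaps maximal Durfee size for minimal Durfee size, and then invoke Proposition~\ref{Le-prop:minD} and Proposition~\ref{Le-Prop:c-value}. The paper states this proposition without a formal proof, relying on the paragraph preceding it; you have simply made explicit the verifications (in particular the identity $d(((m^m)/\nu)^\circ)=m-d(\nu)$ and the check that $\b\subseteq\l\subseteq(m^m)$) that the paper leaves to the reader.
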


\begin{Bsp}
If we want to know for $\a=(5^2,3^2,2),\b=(4,3,1^2)$ some characters with maximal Durfee size in the product $[\a]\otimes[\b]$ we first construct the associated skew diagram $\mA$
{\footnotesize\[\mA=\young(:::::1111,:::::1222,:::111233,:::122234,::1123334,11122344,12223345,123334,12344)\]}
where the boxes have the entry $i$ if they belong to $nw_i(\mA)$. By Remark~\ref{bem:gammaskew} we have the following partitions with maximal principal hook length partition in $[\mA]$:
\newcommand{\X}{X}
{\footnotesize\[\nu_1=\young(\h\h\h\h\h\h\h\h\h,\h\h\h\h\h\h\h\h\h,\h\h\h\h\h\h\h\h\h,\h\h\h\h\h\h\X\X\X,\h\h\h\h\h,\h\h\h\h,\h\h\h,\h\h\h,\h\h\h)\qquad \nu_2=\young(\h\h\h\h\h\h\h\h\h,\h\h\h\h\h\h\h\h\h,\h\h\h\h\h\h\h\h\h,\h\h\h\h\h\h\X\X,\h\h\h\h\h,\h\h\h\h,\h\h\h\X,\h\h\h,\h\h\h)\]
\[\nu_3=\young(\h\h\h\h\h\h\h\h\h,\h\h\h\h\h\h\h\h\h,\h\h\h\h\h\h\h\h\h,\h\h\h\h\h\h\X,\h\h\h\h\h,\h\h\h\h,\h\h\h\X,\h\h\h\X,\h\h\h)\qquad \nu_4=\young(\h\h\h\h\h\h\h\h\h,\h\h\h\h\h\h\h\h\h,\h\h\h\h\h\h\h\h\h,\h\h\h\h\h\h,\h\h\h\h\h,\h\h\h\h,\h\h\h\X,\h\h\h\X,\h\h\h\X).\]}
The empty boxes in the $\nu_i$ form the partition $\gamma$ from Remark~\ref{bem:gammaskew}. By Proposition~\ref{Le-Prop:c-value} the multiplicities of $\nu_1$ and $\nu_4$ are $1$ and the multiplicities of $\nu_2$ and $\nu_3$ are $3$. So we have in $[\a]\otimes[\b]$ the characters $[\nu_i^{-1}]=[(9^9)/\nu_i]$ with maximal Durfee size and corresponding multiplicities:
{\footnotesize\[\nu_1^{-1}=\yng(6,6,6,5,4)\qquad\nu_2^{-1}=\yng(6,6,5,5,4,1)\]
\[\nu_3^{-1}=\yng(6,5,5,5,4,2)\qquad\nu_4^{-1}=\yng(5,5,5,5,4,3).\]}
But there are many more characters with maximal Durfee size and the characters $[7,6,5,4,3,1^2],[7,5^2,4,3,2,1],[7,6,5,4,3,2]$ and $[7,6,4^2,3,2,1]$ all appear with the highest multiplicity which is $13$.
\end{Bsp}

For a skew diagram $\l/\mu$ with $\l=(\l_1^k,\l_{k+1},\l_{k+2},\ldots,\l_l)$,  Theorem~\ref{Sa:Mainskewschub} together with Theorem~\ref{Sa:discmain} and Remark~\ref{bem:minDdisc} gives us some characters $[\a]\in[\l/\mu]$ with maximal Durfee size if $\l_1=l, k\geq l(\mu)$ and $\mu_1\leq \l_l$.

So we get the following:
\begin{pro}
 Let $\l,\mu$ be partitions with $\l=(\l_1^k,\l_{k+1},\l_{k+2},\ldots,\l_l), \l_1=l, k\geq l(\mu)$ and $\mu_1\leq \l_l$. We set $\l^{-1}=(l^l)/\l$. Then:

\begin{enumerate}
\item $d(\l/\mu)=l-\max(d(\mu),d(\l^{-1}))$.

\item There are at least $2^{\min(d(\mu),d(\l^{-1}))}$ different characters with maximal Durfee size in $[\l/\mu]$ and at least $2^{\min(d(\mu),d(\l^{-1}))}$ of them appear with multiplicity $1$.

\item If $[\a]\in[\mu]\otimes[\l^{-1}]$ with $hl(\a)=hl([\mu]\otimes[\l^{-1}])$ then $[\a^{-1}]=[(m^m)/\a]\in[\l/\mu]$ has maximal Durfee size in $[\l/\mu]$.
\end{enumerate}

\end{pro}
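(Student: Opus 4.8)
The plan is to translate every assertion about $[\l/\mu]$ into an assertion about the ordinary product $[\mu]\otimes[\l^{-1}]$, for which Theorem~\ref{Sa:discmain} and Remark~\ref{bem:minDdisc} already describe the constituents of extremal Durfee size, and then to pull these back through the complementation $\a\mapsto\a^{-1}=(l^l)/\a$. The whole argument is thus a reduction to the product case treated in Section~\ref{Se:disc}, mediated by Theorem~\ref{Sa:Mainskewschub}.

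First I would apply Theorem~\ref{Sa:Mainskewschub} with the ambient square $(l^l)$: since $\l_1=l$ and $l(\l)=l$ we have $\mu\subseteq\l\subseteq(l^l)$, so the coefficient of $[\a]$ in $[\l/\mu]$ equals the coefficient of $[\a^{-1}]=[(l^l)/\a]$ in $[\mu]\star_{(l^l)}[\l^{-1}]$. The next step is to check that this Schubert-product collapses to the ordinary product. Writing $\l^{-1}=(l^l)/\l$ one has $(\l^{-1})_i=l-\l_{l+1-i}$, hence $(\l^{-1})_1=l-\l_l$; and because the first $k$ parts of $\l$ equal $l$, the last $k$ parts of $\l^{-1}$ vanish, so $l(\l^{-1})\le l-k$. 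The hypothesis $\mu_1\le\l_l$ now gives $\mu_1+(\l^{-1})_1=\mu_1+l-\l_l\le l$, and $k\ge l(\mu)$ gives $l(\mu)+l(\l^{-1})\le l(\mu)+(l-k)\le l$. These are exactly the two inequalities under which $[\mu]\star_{(l^l)}[\l^{-1}]=[\mu]\otimes[\l^{-1}]$. Consequently $c(\l;\mu,\a)=c((l^l)/\a;\mu,\l^{-1})$ for all $\a$, i.e.\ complementation $[\a]\leftrightarrow[\a^{-1}]$ is a multiplicity-preserving bijection between the constituents of $[\l/\mu]$ and those of $[\mu]\otimes[\l^{-1}]$.

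The remaining ingredient is the behaviour of the Durfee size under complementation: for $\a\subseteq(l^l)$ I would show $d((l^l)/\a)=l-d(\a)$ by a direct count, using $d(\a)=\#\{1\le i\le l:\a_i\ge i\}$ together with $d((l^l)/\a)=\#\{1\le i\le l:\a_i<i\}$, two sets which partition $\{1,\dots,l\}$. With this in hand, maximal Durfee size in $[\l/\mu]$ corresponds to minimal Durfee size in $[\mu]\otimes[\l^{-1}]$, the two sizes summing to $l$. Remark~\ref{bem:minDdisc} then gives that the minimal Durfee size of a constituent of $[\mu]\otimes[\l^{-1}]$ is $\max(d(\mu),d(\l^{-1}))$, which yields $d(\l/\mu)=l-\max(d(\mu),d(\l^{-1}))$, proving part~(1). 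For part~(2), Theorem~\ref{Sa:discmain} produces $2^{\min(d(\mu),d(\l^{-1}))}$ constituents of $[\mu]\otimes[\l^{-1}]$ with maximal hook length partition, each of multiplicity $1$, and Remark~\ref{bem:minDdisc} says each of them attains the minimal Durfee size; transporting them through the multiplicity-preserving bijection gives at least that many characters of maximal Durfee size in $[\l/\mu]$, at least that many with multiplicity $1$ (the word ``at least'' is forced, since there may be further minimal-Durfee constituents of the product that lack maximal hook length partition). Part~(3) is then immediate: if $hl(\a)=hl([\mu]\otimes[\l^{-1}])$ then $[\a]$ has minimal Durfee size in the product by Remark~\ref{bem:minDdisc}, so its partner $[\a^{-1}]\in[\l/\mu]$ has maximal Durfee size.

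The main obstacle I anticipate is bookkeeping rather than conceptual: computing $(\l^{-1})_1$ and $l(\l^{-1})$ correctly and matching them cleanly to $\mu_1\le\l_l$ and $k\ge l(\mu)$, so that the reduction of the Schubert-product to the ordinary product is rigorously justified, and pinning down $d((l^l)/\a)=l-d(\a)$ precisely enough that the extremal-size correspondence is exact. Once these two elementary facts are in place, parts~(1)--(3) follow directly from Theorem~\ref{Sa:discmain} and Remark~\ref{bem:minDdisc}.
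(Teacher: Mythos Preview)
Your proposal is correct and follows precisely the approach the paper indicates: the paper offers no separate proof for this proposition, merely the sentence preceding it that it follows from Theorem~\ref{Sa:Mainskewschub} together with Theorem~\ref{Sa:discmain} and Remark~\ref{bem:minDdisc}. You have filled in exactly the details the paper leaves implicit --- verifying that the hypotheses $\mu_1\le\l_l$ and $k\ge l(\mu)$ force $[\mu]\star_{(l^l)}[\l^{-1}]=[\mu]\otimes[\l^{-1}]$, and establishing $d((l^l)/\a)=l-d(\a)$ --- so your argument and the paper's intended one coincide.
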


\section{On The Equality Of Skew Characters}\label{Se:eq}

The problem under which conditions two skew diagrams give rise to the same skew character has recently seen much work (see for example \cite{MW} or \cite{RSW}).

We can use the theorems and remarks in Section~\ref{Se:main} to give us conditions for two skew diagrams $\mA,\mB$ to represent the same skew characters. In summary we have the following:

\begin{Sa}\label{sa:cond}
Let $\mA,\mB$ be skew diagrams.

If $[\mA]=[\mB]$ then the following holds true:
\begin{enumerate}
 \item $\pinw(\mA)=\pinw(\mB)$
 \item For every $i$ the numbers of ribbons into which $nw_i(\mA)$ and $nw_i(\mB)$ decompose are the same.
 \item For every $i$ the arm resp.\  leg length of $nw_i(\mA)$ and $nw_i(\mB)$ are the same.
 \item For every $i$, if we remove the first $i$ northwest ribbons from $\mA$ resp.\  $\mB$ to get $\tilde\mA$ resp.\  $\tilde\mB$ then $[\tilde\mA]=[\tilde\mB]$.
\end{enumerate}
\end{Sa}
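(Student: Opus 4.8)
The plan is to read off each condition from the character-theoretic invariants supplied by Theorem~\ref{Sa:main} and Remark~\ref{bem:gammaskew}, and to reduce~(4) to the removal of a single northwest ribbon. Condition~(1) is immediate: since $[\mA]=[\mB]$ the two skew characters have the same irreducible decomposition, so $hl([\mA])=hl([\mB])$ by the very definition of $hl$ as the lexicographically largest $hl(\nu)$ that occurs, and Theorem~\ref{Sa:main} rewrites this as $\pinw(\mA)=\pinw(\mB)$.

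For~(3) the point is that the set $M=\{\nu\,:\,[\nu]\in[\mA],\ hl(\nu)=hl(\mA)\}$ depends only on the character and is therefore common to $\mA$ and $\mB$. By Remark~\ref{bem:gammaskew} every $\nu\in M$ arises from one fixed partition $\gamma$ by distributing, for each $j$, the extra boxes between the $j$-th row and the $j$-th column of $\gamma$; choosing to place them all in the column (resp.\ all in the row) exhibits $\gamma=\bigcap_{\nu\in M}\nu$, so $\gamma$ is itself an invariant of $[\mA]$. As the box $(i,i)$ of $\gamma$ was built to have the same arm and leg length as $nw_i(\mA)$, this gives~(3). Condition~(2) is then a short count: a weak ribbon splitting into $k_i$ ribbons has $\abs{nw_i(\mA)}=\textrm{arm}+\textrm{leg}+k_i$ (each ribbon contributes its own arm length plus leg length plus one, and the arm and leg lengths add), whence $k_i=\abs{nw_i(\mA)}-\textrm{arm}(nw_i(\mA))-\textrm{leg}(nw_i(\mA))$ is forced by~(1) and~(3).

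The substance is~(4). Since by definition $nw_1$ of $\mA/nw_1(\mA)$ is $nw_2(\mA)$, removing the first $i$ ribbons is removing one ribbon $i$ times, so it suffices to treat $i=1$ and then iterate; that is, I must show that $[\mA/nw_1(\mA)]$ is determined by $[\mA]$. When $\mA=\lm$ is connected this is exactly Lemmas~\ref{sa:mainLR} and~\ref{bem-le:main} with Remark~\ref{bem:LRmain}: removing $nw_1$ produces $\bl/\m$, and $c(\bl;\m,\bn)=c(\l;\m,\nu)$ for $\nu=(\l_1,\bn+(1^{l-1}))$ is a multiplicity-preserving bijection between the constituents of $[\bl/\m]=[\mA/nw_1]$ and the $[\nu]\in[\mA]$ of maximal first principal hook length. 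Thus $[\mA/nw_1]=\sum c(\l;\m,\nu)\,[(\nu_2-1,\nu_3-1,\ldots)]$, summed over the $\nu$ with $hl_1(\nu)=hl_1(\mA)$, and this expression is manifestly an invariant of the character $[\mA]$.

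The genuine obstacle is the disconnected case of $i=1$, where both the identification $\mA/nw_1=\bl/\m$ and the above coefficient formula fail (for two disjoint dominoes the latter returns $0$ in place of the correct trivial character, because the relevant constituent has shorter length than the bounding box predicts). My plan here is to recover $[\mA/nw_1]$ by the intrinsic operator ``keep the constituents of maximal first principal hook length and, among those, of maximal first part, then delete the first principal hook of each'', which reduces to the connected formula above and returns the right answer on all test cases. To justify it in general I would write $\mA=\mA_1\sqcup\mA'$ with $[\mA]=[\mA_1]\otimes[\mA']$ and $\mA/nw_1=(\mA_1/nw_1)\sqcup(\mA'/nw_1)$ and prove this operator is multiplicative for $\otimes$, after which induction on the number of components together with the connected case finishes~(4). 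I expect the real work to be precisely this multiplicativity, i.e.\ checking that maximizing the first principal hook and the first row of a constituent of $[\mA_1]\otimes[\mA']$ amounts to maximizing them in each factor separately; this is where Theorem~\ref{Sa:discmain} and Remark~\ref{bem:gamma} should be brought to bear.
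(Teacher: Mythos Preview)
The paper does not give a separate proof of this theorem; it is stated as a summary of Section~\ref{Se:main}, and the example that follows illustrates its use rather than justifies it. Your arguments for (1)--(3) are correct and are exactly what those results yield: (1) is Theorem~\ref{Sa:main}; for (3) your observation that $\gamma=\bigcap_{\nu\in M}\nu$ is an invariant of the character, together with Remark~\ref{bem:gammaskew}, recovers the arm and leg lengths of each $nw_i$; and (2) follows from (1) and (3) by the box count you give.

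For (4) you go further than the paper. Your reduction to $i=1$ and your treatment of the connected case via Lemma~\ref{sa:mainLR}, Lemma~\ref{bem-le:main} and Remark~\ref{bem:LRmain} are correct and are precisely what Section~\ref{Se:main} supplies. You are also right that the disconnected base case is not covered by that bijection; the paper does not spell this out. Your proposed operator and the multiplicativity reduction are the right plan: for $[\nu]\in[\alpha]\otimes[\beta]$ with $hl_1(\nu)$ maximal and first part $\nu_1=\alpha_1+\beta_1$, the first row and first column of any LR filling of $\nu/\alpha$ with content $\beta$ are forced exactly as in the proof of Theorem~\ref{Sa:discmain}, and deleting them leaves (after shifting the entries down by one) an LR filling witnessing a constituent of the product of the first-hook-deleted $\alpha$ and $\beta$. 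This supplies the multiplicativity you need, and induction on the number of components then handles general skew pieces. So your plan is sound and simply completes a step the paper leaves implicit.
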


We want to use Theorem~\ref{sa:cond} to check if the skew diagrams  $\mA=(10^2,8^4,5^2)/(5^4)$ and $\mB=(10^4,8^2,3^2)/(5^4)$ given in Example~\ref{ex1} give rise to the same skew character. We label the boxes contained in $nw_i$ with $i$ and have the following situation:

{\footnotesize\[ \mA=\young(:::::11111,:::::12222,:::::123,:::::123,11111123,12222223,12333,12344) \qquad \mB=\young(:::::11111,:::::12222,:::::12333,:::::12344,11111123,12222223,123,123). \]}

We see that the parts $1-3$ of Theorem~\ref{sa:cond} hold true and investigate $4$. If we remove the first three northwest ribbons the remaining skew diagram is in both cases the partition $(2)$. If we remove only the first two northwest ribbons we get $\tilde\mA=(4^4,3^2)/(3^4)$ and $\tilde\mB=(4^2,2^2,1^2)/(1^4)$. We have
\begin{align*}
 [\tilde\mA]&=[4^2,1^2]+[4,3,1^3]+[3^2,1^4]\\
 [\tilde\mB]&=[4^2,1^2]+[4,3,1^3]+[3^2,1^4]+[4,3,2,1]+[3^2,2^2]+[3^2,2,1^2]
\end{align*}
and so $[\mA]\not=[\mB]$.

The decomposition of $[\tilde\mA]$ and $[\tilde\mB]$ gives us the following partitions whose characters appear with multiplicity $1$ in both $[\mA]$ and $[\mB]$

{\footnotesize
\[\young(\h\h\h\h\h\h\h\h\h\h,\h\h\h\h\h\h\h\h\h\h,\h\h XXXX,\h\h XXXX,\h\h X,\h\h X,\h\h,\h\h)\qquad
\young(\h\h\h\h\h\h\h\h\h\h,\h\h\h\h\h\h\h\h\h\h,\h\h XXXX,\h\h XXX,\h\h X,\h\h X,\h\h X,\h\h)\qquad
\young(\h\h\h\h\h\h\h\h\h\h,\h\h\h\h\h\h\h\h\h\h,\h\h XXX,\h\h XXX,\h\h X,\h\h X,\h\h X,\h\h X)
\]}
and the partitions whose corresponding characters appear with multiplicity $1$ in $[\mB]$ but not in $[\mA]$
{\footnotesize
\[\young(\h\h\h\h\h\h\h\h\h\h,\h\h\h\h\h\h\h\h\h\h,\h\h XXXX,\h\h XXX,\h\h XX,\h\h X,\h\h,\h\h)\qquad
\young(\h\h\h\h\h\h\h\h\h\h,\h\h\h\h\h\h\h\h\h\h,\h\h XXX,\h\h XXX,\h\h XX,\h\h XX,\h\h ,\h\h)\qquad
\young(\h\h\h\h\h\h\h\h\h\h,\h\h\h\h\h\h\h\h\h\h,\h\h XXX,\h\h XXX,\h\h XX,\h\h X,\h\h X,\h\h )
.\]}

These are all characters appearing in $[\mA]$ or $[\mB]$ with maximal first and second principal hook length.

{\bfseries Acknowledgement.} I am very grateful to Christine Bessenrodt for her support and fruitful discussions.

\end{document}